\newcommand{\eps}{\varepsilon}
\newcommand{\E}{\mathbb{E}}
\newcommand{\R}{\mathbb{R}}
\newcommand{\N}{\mathbb{N}}
\newcommand{\F}{\mathcal{F}}
\renewcommand{\P}{\mathbb{P}}
\newcommand{\eqd}{\stackrel{d}{=}}
\newcommand{\df}{\textbf}
\DeclareMathOperator{\Bin}{Bin}
\begin{document}



\section{Introduction}

Let $S_n$ be the group of all permutations $\sigma=(\sigma(1),\dots,\sigma(n))$ of $\{1,\dots,n\}$
with composition given by $(\sigma\tau)(i)=\sigma(\tau(i))$. We denote by $\sigma_j$ the adjacent
transposition or \df{swap} $(j\ j\!+\!1) = (1, \dots, j\!+\!1,j,\dots,n)$. A {\bf sorting network}
of \df{size} $n$ is a sequence $(s_1,s_2, \dots, s_N)$ of $N:=\binom{n}{2}$ integers with
$0<s_k<n$, such that the composition ${\sigma_{s_1}\sigma_{s_2} \cdots \sigma_{s_N}}$ equals the
reverse permutation ${(n,n-1,\dots,1)}$. We sometimes say that at time $k$ a swap occurs at
position $s_k$, and we illustrate a sorting network by a set of crosses with coordinates $(k,s_k)$
for $k=1,\ldots,N$. (This is natural, since the crosses may be joined by horizontal lines to give
a ``wiring diagram'' consisting of $n$ polygonal lines whose order is reversed as we move from
left to right; see Figure~\ref{fig:example}.)
\begin{figure}
  \begin{center}
{\scalebox{0.75}{\includegraphics{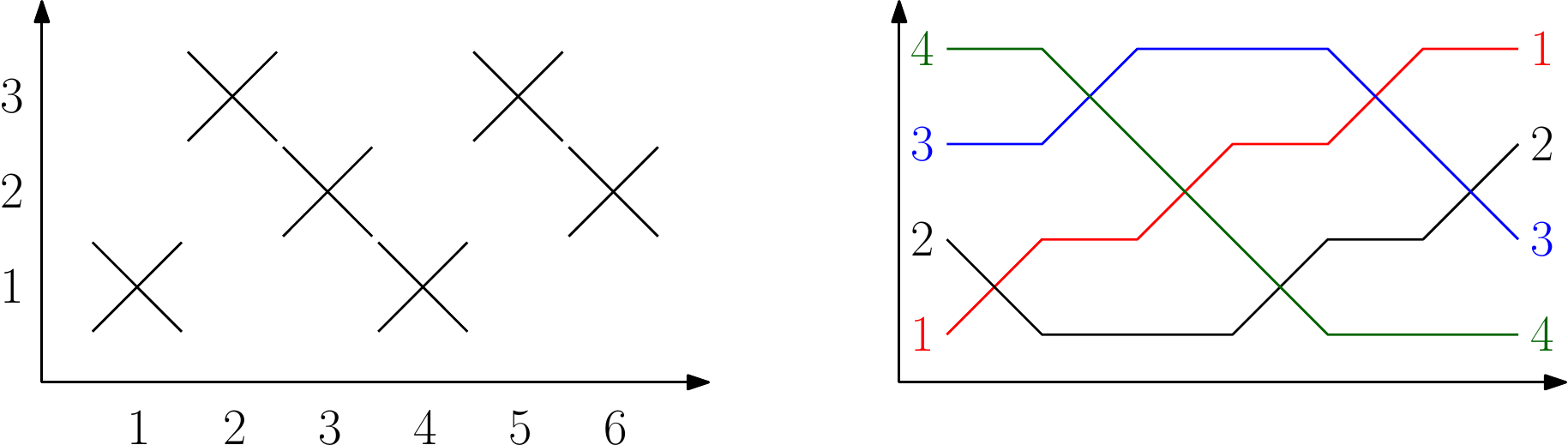}}}
    \caption{{\em Left:} the sorting network $(1,3,2,1,3,2)$ of size $4$,
      illustrated by crosses corresponding to its swaps.  {\em Right:} the
      associated wiring diagram.}
    \label{fig:example}
  \end{center}
\end{figure}

Interest in sorting networks was initiated by Stanley, who proved in \cite{St} that the number of
sorting networks of size $n$ is equal to the number of standard staircase-shape Young tableaux of
size $n$, i.e.\ those with shape $(n-1,n-2,\dots,1)$. Uniformly random sorting networks were
introduced and studied by Angel, Holroyd, Romik, and Virag in \cite{AHRV}, giving rise to many
striking results and conjectures.


A {\bf pattern} is any finite sequence of positive integers that is an initial segment of some
sorting network. Thus for example, $(1,2,1)$ and $(4,2)$ are patterns, but $(1,1)$ and $(1,2,1,2)$
are not. The \df{size} of a pattern is the minimum size of a sorting network that contains it as
an initial segment, which is also one more than the maximal element in the pattern.

Let $\omega=(s_1, \dots, s_N$) be a sorting network of size $n$ and let
$\gamma=(\gamma_1,\dots,\gamma_\ell)$ be a pattern. Let $[i,j]\subseteq[1,N]$ and $[a,b]\subseteq
[1,n-1]$, and consider the subsequence $t_1,\dots,t_\ell$ of $s_i,\dots,s_j$ consisting of
precisely those elements lying in the interval $[a,b]$. We say that the pattern $\gamma$
\df{occurs} at time interval $[i,j]$ and position $[a,b]$ (or simply at $[i,j]\times[a,b]$) if
$\gamma_u=t_u-a+1$ for $u=1,\dots,\ell$, and no $k\in[i,j]$ has $s_k\in\{a-1,b+1\}$. In other
words, the swaps in the space-time window $[i,j]\times[a,b]$ are precisely those of $\gamma$,
after an appropriate shift in location, and there are no swaps at the two adjacent positions,
$a-1$ and $b+1$, in this time interval. See Figure~\ref{fig:pattern1} for an example.
%
\begin{figure}[h]
  \begin{center}
  {\scalebox{0.87}{\includegraphics{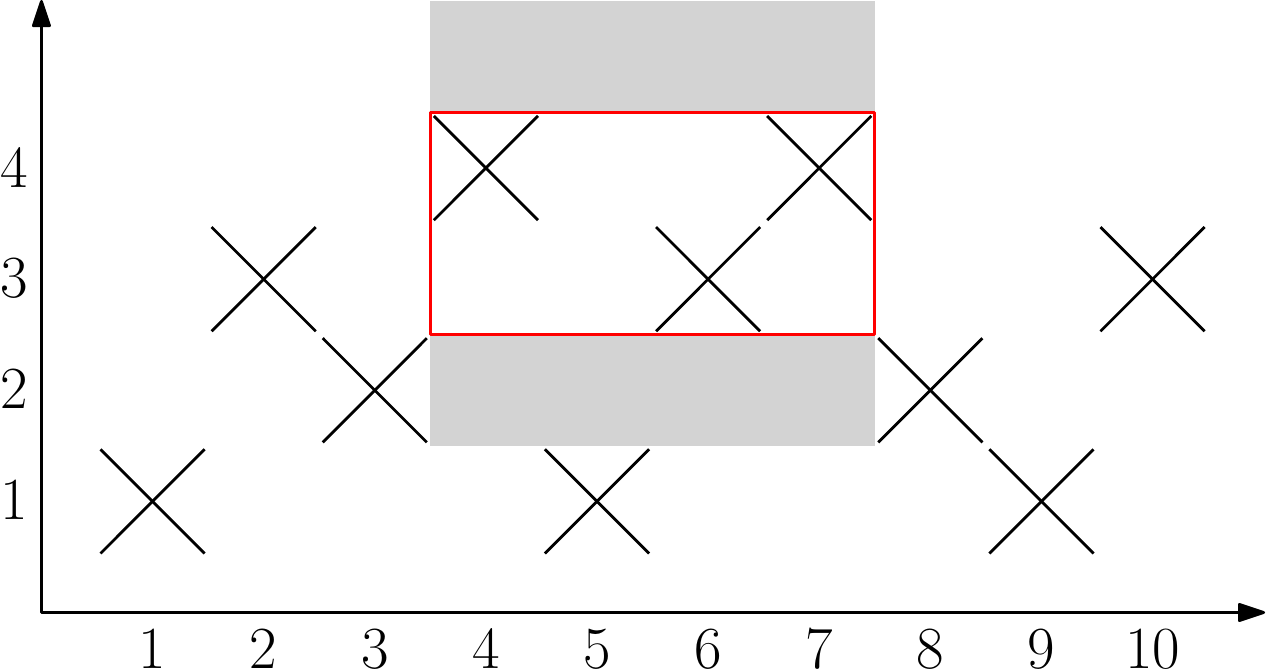}}}
    \caption{The pattern $(2,1,2)$ occurs in the sorting network
      $(1,3,2,4,1,3,4,2,1,3)$ at time interval $[i,j]=[4,7]$ and position
      $[a,b]=[3,4]$.  Note the requirement that the shaded regions contain
      no swaps.}
    \label{fig:pattern1}
  \end{center}
\end{figure}

We say that a pattern $\gamma$ \df{occurs $R$ times} in a sorting network $\omega$ if $R$ is the
maximum integer for which there exist pairwise disjoint rectangles
$\{[i_r,j_r]\times[a_r,b_r]\}_{r=1}^{R}$ such that $\gamma$ occurs at each.  See
Figure~\ref{fig:pattern2}.

\begin{figure}
\begin{center}
{\scalebox{0.87}{\includegraphics{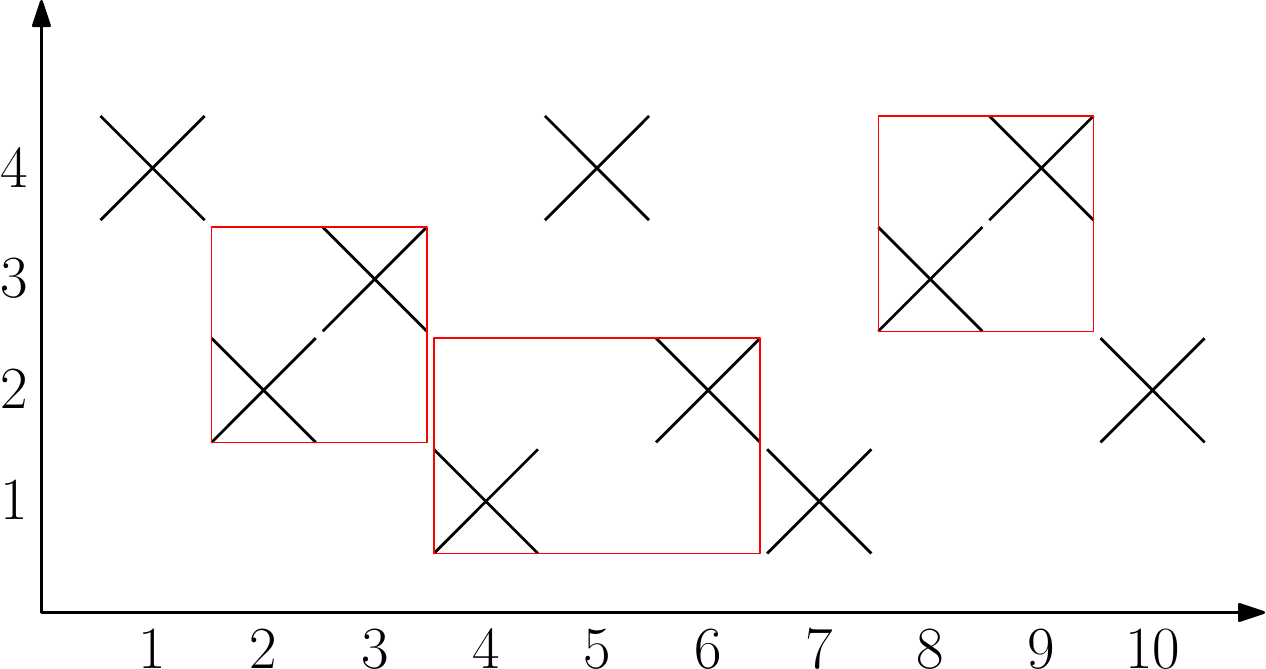}}}
   \caption{Pattern $(1,2)$ occurs  $3$ times in the sorting
      network $(4,2,3,1,4,2,1,3,4,2)$.}
    \label{fig:pattern2}
  \end{center}
\end{figure}

\begin{theorem}\label{any_subnetwork}
  Fix any pattern $\gamma$ of size $k$. There exist constants $c_1,c_2>0$
  (depending on $\gamma$) such that for every $n\ge k$, the
  pattern $\gamma$ occurs at least $c_1 n^2$
  times in a uniformly random sorting network of size $n$,
  with probability at least $1-e^{-c_2n}$.
\end{theorem}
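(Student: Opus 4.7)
The plan has three ingredients: tile the space-time diagram with $\Theta(n^2)$ disjoint test windows of bounded size; show that each window independently contains $\gamma$ with probability bounded below; and apply a concentration inequality to the total count. Specifically, partition $[1,N]\times[1,n-1]$ into a grid of $\Theta(n^2)$ disjoint rectangles of dimensions $K\times(k-1)$ separated by small buffer zones, where $K$ is a large constant depending only on the pattern $\gamma$. Let $X$ denote the number of these rectangles in which $\gamma$ occurs; any lower bound on $X$ is also a lower bound on the number of disjoint pattern occurrences in the sense of the theorem.

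For the single-window estimate, I would prove that $\P[\gamma\text{ occurs in }R]\ge c$ for some uniform constant $c>0$ and every window $R$ in the tiling. The natural tool is a local modification: given an arbitrary sorting network $\omega$, construct a modified network $\omega'$ that agrees with $\omega$ outside a bounded neighborhood of $R$ and in which $\gamma$ occurs at $R$. A uniform bound on the multiplicity of the map $\omega\mapsto\omega'$ then yields the desired probability bound, since the uniform measure pushed forward by a bounded-to-one map assigns at least a constant mass to its image. The cleanest implementation exploits the invariance of the uniform measure under Tits' braid and commutation moves: starting from $\omega$, one performs a controlled sequence of local moves inside a neighborhood of $R$ to force $\gamma$ to appear there.

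For the concentration step, since $\E X\ge c'n^2$, I would group the windows into $\Theta(n)$ ``slabs'' of $\Theta(n)$ windows each, so the contribution of each slab to $X$ is at most $\Theta(n)$. Exposing the sorting network slab by slab and applying a martingale bounded-differences inequality of Azuma/McDiarmid type yields a deviation bound of order $\exp(-c_2\,n^4/n^3)=e^{-c_2 n}$, matching the exponential rate in the statement.

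The main obstacle will be the single-window lower bound. The difficulty is that the swaps inside $R$ are forced to be consistent with the data of the $n$ wires entering and leaving $R$ in $\omega$, so one cannot simply ``write in'' $\gamma$: a local insertion may require compensatory modifications outside $R$ in order to keep the full word a sorting network. Controlling how far these compensations propagate, while keeping the modification map bounded-to-one, is the technical heart of the argument. A related subtlety is that windows near the spatial boundary (the top/bottom of the wiring diagram, or the start/end of the time axis) may not admit the same modification and will likely need separate treatment using symmetries of the uniform measure.
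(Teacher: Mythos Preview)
Your plan has genuine gaps at both of its two substantive steps. For the single-window bound, you correctly flag the local-modification step as the main obstacle and then leave it open: braid and commutation moves preserve the uniform measure, but there is no known mechanism to force a prescribed pattern into a prescribed window by a bounded-to-one map built from such moves, and you offer none. A sorting network is globally constrained --- rewriting the swaps in a width-$(k{-}1)$ window generally destroys the reduction to the reverse permutation, and controlling the propagation of the compensating moves is precisely the hard part you have deferred. For the concentration step, McDiarmid's inequality requires a product structure that uniform sorting networks do not have, and for the Doob-martingale version the bounded-differences hypothesis is not available: one cannot alter the swaps in one time slab while freezing the others and remain in the set of sorting networks, so the claim that $|\E[X\mid\F_i]-\E[X\mid\F_{i-1}]|=O(n)$ would itself require an argument about the conditional law that you have not supplied.

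The paper circumvents both issues by changing coordinates. Via the Edelman--Greene bijection the problem becomes one about a uniform standard staircase Young tableau, where successive removal of the largest entry is a Markov chain with transition probabilities given explicitly by the hook-length formula. Hook estimates show that, conditionally on the past, the next removed box lies in a fixed size-$k$ staircase subdiagram with probability at least $c/n$, and when it does, it is the ``correct'' box there with probability at least some $p>0$ depending only on $k$; two elementary lemmas on such sequences (a binomial domination and an Azuma-type bound on the filling time) then yield $\Theta(n)$ disjoint subtableaux of the required order type, all with entries in the top $O(n)$. Translating back gives $\Theta(n)$ occurrences of $\gamma$ within the first $O(n)$ swaps. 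The upgrade to $\Theta(n^2)$ occurrences comes not from any concentration inequality for the global count, but from \emph{stationarity} of the uniform sorting network: each of the $\Theta(n)$ consecutive length-$O(n)$ time blocks has the same success probability $\ge 1-e^{-cn}$, and a union bound over blocks finishes the proof.
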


We conjecture that the probability in Theorem~\ref{any_subnetwork} is in fact at least
${1-e^{-cn^2}}$ for some $c=c(\gamma)$.

We will prove Theorem~\ref{any_subnetwork} by establishing a closely related result about
uniformly random standard staircase-shape Young tableaux, and using a bijection due to Edelman and
Greene \cite{EG} between sorting networks and Young tableaux.

\sloppypar Write $\N=\{1,2,\ldots\}$.  A {\bf Young diagram} $\lambda$ is a set of the form
${\{(i,j)\in\mathbb{N}^2: 1\le j\le \lambda_i\}}$, where
$\lambda_1\ge\lambda_2\ge\lambda_3\ge\dots\ge 0$ are integers and $\sum_{i=1}^{\infty}
\lambda_i=:|\lambda|<\infty$. The numbers $\lambda_i$ are the {\bf row lengths} of $\lambda$. In
what follows we denote by $(\lambda_1,\lambda_2,\dots)$ the Young diagram with row lengths
$\lambda_1\ge\lambda_2\ge \dots$. We call an element $x=(i,j)\in\lambda$ a \df{box}, and draw it
as a unit square at location $(i,j)$ (with the traditional convention that $(1,1)$ is at the top
left and the first coordinate is vertical). A {\bf tableau} $T$ of shape $\lambda$ is a map from
$\lambda$ to the integers whose values are non-decreasing along rows and columns. We call $T(x)$
the \df{entry} assigned to box $x$. A \df{standard Young tableau} is a tableau $T$ of shape
$\lambda$ such that the set of entries  of $T$ is $\{1,2\dots,|\lambda|\}$.
 We are mostly
interested in {\bf standard
  staircase-shape Young tableaux} of size $n$, i.e.\ those with shape
staircase Young diagram $(n-1,n-2,\dots,1)$.

For $(i,j),(k,\ell)\in\mathbb N^2$ we write $(i,j)\leq (k,\ell)$ if $i\leq k$ and $j\leq \ell$.
For a Young diagram $\lambda$ and a box $(i,j)\in\lambda$, we define the \df{subdiagram}
$\lambda^{(i,j)}$ with top-left corner $(i,j)$ by $\lambda^{(i,j)}:=\{(k,\ell)\in
\lambda:(k,\ell)\ge (i,j)\}$; clearly $\lambda^{(i,j)}$ is mapped to a Young diagram by the
translation $(k,\ell)\mapsto (k-i+1,\ell-j+1)$.  If $T$ is a tableau of shape $\lambda$ then we
define the \df{subtableau} $T^{(i,j)}$ to be the restriction of $T$ to $\lambda^{(i,j)}$, and we
call $\lambda^{(i,j)}$ the \df{support} of $T^{(i,j)}$.

We say that two tableaux $S$ and $T$ of the same shape $\lambda$ are \df{identically ordered} if
for all $x,y\in\lambda$ we have $S(x)<S(y)$ if and only if $T(x)<T(y)$. Furthermore, if $S$ and
$T$ are tableaux or subtableaux, and there is a translation $\theta$ that  maps (bijectively) the
support of $S$ to the support of $T$, then we say that $S$ and $T$ are \df{identically ordered} if
for all $x,y$ in the support of $S$ we have $S(x)<S(y)$ if and only if
$T(\theta(x))<T(\theta(y))$. Figure~\ref{fig:subtableau} illustrates the above terminology.

\begin{figure}
  \begin{center}
    \includegraphics[width=0.6\textwidth]{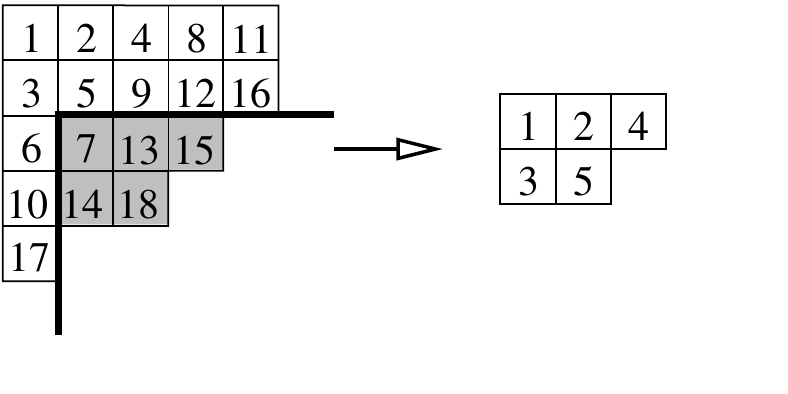}
    \caption{A standard Young tableau $T$ of shape $\lambda=(5,5,4,3,1)$,
      the subdiagram $\lambda^{(3,2)}$, the subtableau $T^{(3,2)}$, and a
      standard Young tableau identically ordered with $T^{(3,2)}$.  }
    \label{fig:subtableau}
  \end{center}
\end{figure}

Theorem~\ref{any_subnetwork} will be deduced from the following.

\begin{theorem}
  \label{th_any_corner} Let $T$ be any standard staircase-shape
  Young tableau of size $k$. For some positive constants $c_1'$, $c_2'$ and
  $c_3'$ (depending only on $k$), with probability at least $1-e^{-c'_3n}$, a
  uniformly random standard staircase-shape Young tableau of size $n\ge k$
  contains at least $c_1'n$ subtableaux with pairwise disjoint supports such
  that:
\begin{enumerate}
\item each is identically ordered with $T$;
\item all their entries are greater than $N-c'_2n$.
\end{enumerate}
\end{theorem}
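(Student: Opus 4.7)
The plan is to pack $m=\Omega(n)$ disjoint translates $R_1,\ldots,R_m$ of the size-$k$ staircase inside $\lambda=(n-1,n-2,\ldots,1)$ along its anti-diagonal, argue that on a high-probability event each $R_s$ is independently identically ordered with $T$ with bounded-below probability, and then conclude by a Chernoff-type tail bound. Writing $T'$ for the uniformly random staircase SYT of size $n$, I would place $R_s$ with top-left corner $(r_s,c_s)=(1+(s-1)(k-1),\,n-k+2-r_s)$, so that $r_s+c_s=n-k+2$ and each $R_s$ has its long edge flush against the anti-diagonal of $\lambda$. A direct check shows that there are $m\ge\lfloor(n-k+1)/(k-1)\rfloor$ pairwise disjoint regions, and that each $R_s$ is adjacent to cells of $\lambda\setminus\bigcup_t R_t$ only from above and from the left (neighbors below or to the right lie outside $\lambda$).

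There are two main ingredients. The first is a concentration estimate: with probability at least $1-e^{-c_3'n}$, every entry of $T'$ in $\bigcup_s R_s$ exceeds $N-c_2'n$, which delivers condition~(2). This should follow from standard tail bounds for the positions of large entries in a uniformly random staircase-shape SYT, obtainable from the hook-length formula or limit-shape/bulk-asymptotic estimates. The second ingredient is a per-region exchangeability argument. Conditional on $T'|_{\lambda\setminus\bigcup_t R_t}$ and on the multiset of values assigned to each $R_s$, the restrictions $T'|_{R_s}$ are conditionally independent across $s$, and each is uniform over those standard fillings of the size-$k$ staircase by the given multiset that are compatible with the boundary constraints imposed by the conditioned outside. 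Since there are at most $f_k$ such fillings (the total number of SYTs of the size-$k$ staircase), each $R_s$ is $T$-ordered with conditional probability at least $1/f_k$, \emph{provided} the $T$-ordered filling is itself boundary-compatible. A further concentration argument---that a positive fraction of the $R_s$ have all of their $2(k-1)$ boundary cells strictly smaller in entry than the corresponding interior cells---shows this compatibility for $\Omega(n)$ regions except with probability $e^{-c'n}$.

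Combining these yields conditionally independent Bernoulli trials with success probability at least $1/f_k$ over $\Omega(n)$ regions; Chernoff's inequality then gives at least $c_1'n$ $T$-ordered regions except with probability $e^{-cn}$, and a union bound with the concentration events completes the proof. The main obstacle is the boundary-compatibility step: both the cells of $R_s$ and their adjacent boundary cells lie within $O(k)$ of the anti-diagonal and thus have entries of comparable magnitude. Establishing with exponentially small failure probability that a constant fraction of the $R_s$ have all $2(k-1)$ boundary inequalities resolved in the direction required demands relatively sharp joint concentration estimates for entries at specific cells, and is the technical heart of the theorem.
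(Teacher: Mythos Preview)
Your outline places the regions correctly and the conditional-independence observation (once you condition on the outside and on the value multisets, the fillings of the $R_s$ are independent because distinct $R_s$ occupy disjoint rows and disjoint columns) is sound. But both of your ``two main ingredients'' are genuine gaps, not routine technicalities, and the first one is probably false as stated.

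For the first ingredient you assert that with probability $1-e^{-c_3'n}$ \emph{every} entry in $\bigcup_s R_s$ exceeds $N-c_2' n$. For a single corner box $x$ in the middle of the anti-diagonal, the removal time in the decreasing-diagram Markov chain is of order $n$ with $P(T'(x)\le N-Cn)\asymp e^{-c\,C}$, a constant in $n$. With $\Theta(n)$ such boxes whose removal times are only weakly (and not obviously negatively) correlated, one should \emph{not} expect the intersection to hold with probability $1-e^{-cn}$; the number of regions not yet emptied by time $Cn$ has mean of order $n\,e^{-cC}$, and driving this to zero with the required tail would need $C$ to grow with $n$. What is actually true, and all that is needed, is that \emph{most} regions (a $(1-\varepsilon)$ fraction) are emptied by time $c_1 n$ with probability $1-e^{-cn}$. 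This weaker statement already requires a supermartingale/Azuma argument and is essentially the content of the paper's Lemma~4.2.

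The second gap is more serious. Even granting that for a positive fraction of $s$ the value set of $R_s$ consists of $\binom{k}{2}$ consecutive large integers, you still need the $T$-ordered filling of those values to be compatible with the $2(k-1)$ outside boundary cells. Those boundary cells sit at the same $O(k)$ distance from the anti-diagonal as the interior cells, so there is no a priori reason their entries are smaller than an arbitrary interior entry; the comparisons you need are between specific pairs of non-comparable cells whose entries are of the same order. Proving that a \emph{constant} fraction of the $R_s$ satisfy all $2(k-1)$ such inequalities, with an $e^{-cn}$ failure bound, is not a ``further concentration argument'' one can wave at: it is precisely the statement of the theorem restated for a slightly larger staircase, so the argument is circular unless you supply an independent mechanism.

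The paper sidesteps both issues by working dynamically rather than statically. It runs the Markov chain $\lambda=\lambda^0\supset\lambda^1\supset\cdots$ of removing the current maximum, sets $X_t=a$ when the removed box lands in $K_a$, and lets $A_t$ be the event that this box is the \emph{correct} corner of $K_a\cap\lambda^{t-1}$ dictated by the target tableau $T$. The key estimates are: (i) conditional on $X_t=a$ and the past, every corner of $\lambda^{t-1}$ inside $K_a$ is hit with probability at least $p=p(k)>0$ (Lemma~3.3, a hook-length ratio bound), so $\P(A_t\mid\mathcal F_t)\ge p$; and (ii) whenever $K_a$ is not yet empty, $\P(X_{t+1}=a\mid\widehat{\mathcal F}_t)\ge c/m$ (Lemma~3.2). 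These feed into a general ``counters'' lemma (Corollary~4.3 combining Lemmas~4.1 and~4.2) yielding $\Omega(n)$ regions that are both emptied within $O(n)$ steps \emph{and} had every removal land in the correct corner, with failure probability $e^{-cn}$. Because the probability bound in (i) is computed from the hook lengths of the full current diagram $\lambda^{t-1}$, the boundary cells are automatically accounted for and no separate compatibility argument is needed.
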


As an application of Theorem~\ref{any_subnetwork} we prove that a uniformly random sorting network
is not geometrically realizable in the following sense. Consider a set $X$ of $n$ points in $\R^2$
such that no two points from $X$ lie on the same vertical line, no three points are collinear, and
no two pairs of points define parallel lines. Label the points $1,\dots,n$ from left to right
(i.e.\ in order of their first coordinate). Let $X_\phi$ be the set obtained by rotating $\R^2$ by
angle $\phi$ about the origin, and let $\sigma_\phi$ be the permutation found by reading the
labels in $X_\phi$ from left to right. As $\phi$ increases from $0$ to $\pi$, the permutation
$\sigma_\phi$ changes via a sequence of swaps, which form a sorting network. Any sorting network
that can be generated in this way is called {\bf geometrically realizable}. (Such networks were
called {\em
  stretchable} in \cite{AHRV}, but this term is used with a different
meaning in \cite{GR,GP}).

Goodman and Pollack \cite{GP} gave an example of a sorting network of size $5$ that is not
geometrically realizable. On the other hand, in \cite{AHRV}, it was conjectured (on the basis of
strong experimental and heuristic evidence) that a uniformly random sorting network is with high
probability {\em approximately} geometrically realizable, in the sense that its distance to some
random geometrically realizable network tends to zero in probability (in a certain natural
metric). The conjectures of \cite{AHRV} would also imply that, for fixed $m$, the sorting network
obtained by observing only $m$ randomly chosen particles from a uniformly random sorting network
of size $n\geq m$ is with high probability geometrically realizable as $n\to\infty$. (The
conjectures also imply that these size-$m$ networks have a limiting distribution as $n\to\infty$,
as well as providing a precise description of the limit. Certain aspects of the latter prediction
were verified rigorously in \cite{AH}.) However, we prove that with high probability a uniformly
random sorting network is {\em
  not} itself geometrically realizable.

\begin{theorem}
  \label{th_realizability} The probability that a uniformly random sorting
  network of size $n$ is geometrically realizable tends to zero as $n$
  tends to infinity.
\end{theorem}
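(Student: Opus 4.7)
The plan is to reduce Theorem~\ref{th_realizability} to Theorem~\ref{any_subnetwork} by exhibiting a single ``forbidden'' pattern $\gamma^\star$ that cannot occur in any geometrically realizable sorting network. Goodman and Pollack \cite{GP} provide exactly such a candidate: a sorting network of size $5$ that is not geometrically realizable. Since its length is $\binom{5}{2}=10$, this network is already a complete sorting network and therefore qualifies as a pattern of size $5$ (being the initial segment of itself).

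The crux is the following hereditary lemma: if a geometrically realizable sorting network $\omega$ of size $n\ge 5$ contains $\gamma^\star$ as a pattern at some window $[i,j]\times[a,b]$, then $\gamma^\star$ is itself geometrically realizable --- a contradiction. To prove it, fix a point set $X=\{x_1,\ldots,x_n\}\subset\R^2$ whose rotation realizes $\omega$. The pattern condition --- no swaps at positions $a-1$ or $b+1$ during $[i,j]$ --- forces the five labels $W=\{w_1,\ldots,w_5\}$ occupying positions $a,a+1,\ldots,a+4$ to remain constant throughout the window, and guarantees that no pair-crossing in that time interval mixes one of these five points with any other point of $X$. Because $\gamma^\star$ comprises \emph{all} $\binom{5}{2}=10$ possible swaps among five wires, each of the $10$ pair-lines among $\{x_{w_1},\ldots,x_{w_5}\}$ becomes vertical at some angle inside the window, and the chronological order of these $10$ crossings is precisely $\gamma^\star$. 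Since the five-point subset inherits the genericity of $X$, rotating it from the angle at the start of the window produces exactly the size-$5$ sorting network $\gamma^\star$, contradicting Goodman--Pollack.

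Given the lemma, the theorem follows at once: applying Theorem~\ref{any_subnetwork} with $\gamma=\gamma^\star$ yields constants $c_1,c_2>0$ such that the pattern $\gamma^\star$ occurs (at least $c_1 n^2$ times) in a uniformly random size-$n$ sorting network with probability at least $1-e^{-c_2 n}$; by the lemma this forces non-realizability, and hence the probability of geometric realizability is at most $e^{-c_2 n}\to 0$.

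The principal obstacle is the hereditary lemma, whose delicate point is the verification that \emph{all ten} pair-crossings among the five selected points fall inside the window. Here the completeness of $\gamma^\star$ as a size-$5$ sorting network is essential: a proper initial segment of $\gamma^\star$ would allow further pair-crossings among $W$ to occur outside $[i,j]$, which could conceivably produce a different and realizable five-point sorting network, thereby escaping the contradiction. This is the reason for using the full Goodman--Pollack example rather than a truncation of it.
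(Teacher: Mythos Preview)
Your proof is correct and follows essentially the same approach as the paper: both combine the Goodman--Pollack non-realizable size-$5$ network with the pattern theorem via a hereditary lemma. The only difference is that the paper invokes Proposition~\ref{Proposition_any_subnetwork_simplified}, whose proof actually yields occurrences at windows of the form $[1,t]\times[a,b]$; with $i=1$ the five wires are immediately the original points $x_a,\ldots,x_{a+4}$, so the hereditary step is a one-liner, whereas you use Theorem~\ref{any_subnetwork} and must argue (as you do) that completeness of $\gamma^\star$ forces all ten pair-crossings among the five tracked points to lie inside an arbitrary window $[i,j]$.
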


While our proof yields an exponential (in $n$) bound on the probability that a uniform sorting
network of size $n$ is geometrically realizable, we believe the probability is in fact $O(e^{-c
n^2})$.

The paper is organized as follows. In Section~2 we recall basic definitions and the Edelman-Greene
bijection between sorting networks and standard Young tableaux. In Sections~3 and 4 we prove some
auxiliary lemmas about Young tableaux and sequences of random variables, respectively. In
Section~5 we prove Theorem~\ref{th_any_corner} and then deduce Theorem~\ref{any_subnetwork} as a
corollary. Finally, in Section~6 we prove Theorem~\ref{th_realizability}.

\section{Sorting networks and Young tableaux}

Edelman and Greene \cite{EG} introduced a bijection between sorting networks of size $n$ and
standard staircase-shape Young tableaux of size $n$, i.e.\ of shape $(n-1,n-2,\dots,1)$. We
describe it in a slightly modified version that is more convenient for us.

Given a standard staircase-shape Young tableaux $T$ of size $n$, we construct a sequence of
integers $s_1,\dots, s_N$ as follows. Set $T_{1}=T$ and repeat the following for $t=1,2,\dots,N$.

\begin{enumerate}
\item Let $x=(n-j,j)$ be the location of the maximal entry in the tableau $T_t$.
  Set $s_t=j$.
\item Compute the sliding path, which is a sequence $x_1,x_2,\dots, x_\ell$, such that $x_1=x$ and
 for $i=1,2,\dots$ we define $x_{i+1}$ to be the box among $\{x_i-(1,0),
x_i-(0,1)\}$ with larger entry in $T_t$, with the convention that $T_t(x)=0$ for every $x$ outside
the staircase Young diagram of size $n$. Let $\ell$ be the minimal $i$ such that $T_t(x_i)=0$.

\item Perform the sliding, i.e.\ define the tableau $T_{t+1}$ as follows.
    Set $T_{t+1}(x_{i})=T_t(x_{i+1})$ for $i=1,\dots,\ell-1$ and set
    $T_{t+1}(y)=T_t(y)$ for all boxes $y$ of the staircase Young diagram
    of size $n$ not belonging to $\{x_1,\dots,x_{\ell-1}\}$.
\end{enumerate}

An example of this procedure is shown in Figure~\ref{figure_sliding}. Edelman and Greene \cite{EG}
proved that the resulting sequence of numbers is indeed a sorting network, and furthermore that
the algorithm provides a bijection between standard staircase-shape Young tableaux and sorting
networks.

\begin{figure}
  \begin{center}
    \scalebox{0.95}{\includegraphics[width=\textwidth]{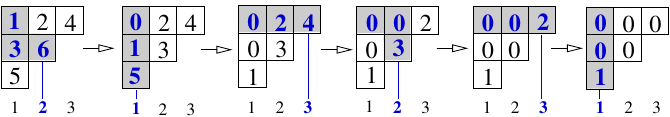}}
\caption{A standard staircase-shape Young tableau, sliding paths (shaded) and the sequence of
tableaux in the Edelman--Greene bijection. Here $n=4$ and the corresponding sorting network is
      $(2,1,3,2,3,1)$. Vertical lines show the correspondence between the positions of maximal entries
       in the tableaux and numbers $s_1,\ldots ,s_N$ of the sorting network.}
    \label{figure_sliding}
  \end{center}
\end{figure}

Now we fix $n$, consider the set of all sorting networks of this size and equip it with the
uniform measure. The Edelman--Greene bijection maps this measure to the uniform measure on the set
of all standard staircase-shape Young tableaux of size $n$.

Given a standard Young tableau $T$ of shape $\lambda$ with $|\lambda|=M$ we define a sequence of
Young diagrams by
\[
\lambda^i = \{x\in\lambda : T(x) \leq M-i\}.
\]
Thus $\lambda=\lambda^0 \supset \lambda^1 \supset \dots \supset \lambda^M=\emptyset$, and
$\lambda^i\setminus\lambda^{i+1}$ consists of the single box $T^{-1}(|\lambda|-i)$. If $T$ is a
uniformly random standard Young tableau of shape $\lambda$, then conditional on $\lambda^i,
\lambda^{i-1},\dots,\lambda^0$, the restriction of $T$ to $\lambda^i$ is uniformly random. Thus
the sequence of diagrams described above is a Markov chain.

\section{Some properties of Young tableaux}

In this section we present a fundamental result about Young diagrams (the hook formula) and deduce
some of its consequences.

When drawing pictures of Young diagrams we adopt the convention that the first coordinate $i$ (the
row index) increases downwards while the second coordinate $j$ (the column index) increases from
left to right. Given a Young diagram $\lambda$, its {\bf transposed diagram} $\lambda'$ is
obtained by reflecting $\lambda$ with respect to diagonal $i=j$. The column lengths of $\lambda$
are the row lengths of $\lambda'$.

For any box $x=(i,j)$ of a Young diagram $\lambda$, its {\bf arm} is the collection of
$\lambda_i-j$ boxes to its right: $\{(i,j')\in\lambda : j'>j\}$. The {\bf leg} of $x$ is the set
$\{(i',j)\in\lambda : i'>i\}$ of $\lambda'_j-i$ boxes below it. The union of the box $x$, its arm
and its leg is called the {\bf hook} of $x$. The number of boxes in the hook is called the {\bf
hook length} and is denoted by $h(x)$. The {\bf co-arm} is the set $\{(i,j')\in\lambda : j'<j\}$;
the {\bf co-leg} is the set $\{(i',j)\in\lambda : i'<i\}$, and their union (which does not include
$x$) is called the {\bf co-hook} and denoted by $\mathcal C(x)$. See Figure~\ref{fig:arm_leg}.
Finally, a {\bf corner} of a Young diagram $\lambda$ is a box $x\in\lambda$ such that $h(x)=1$, or
equivalently such that $\lambda\setminus \{x\}$ is also a Young diagram.
\begin{figure}\begin{center}
\scalebox{1.2}{\includegraphics{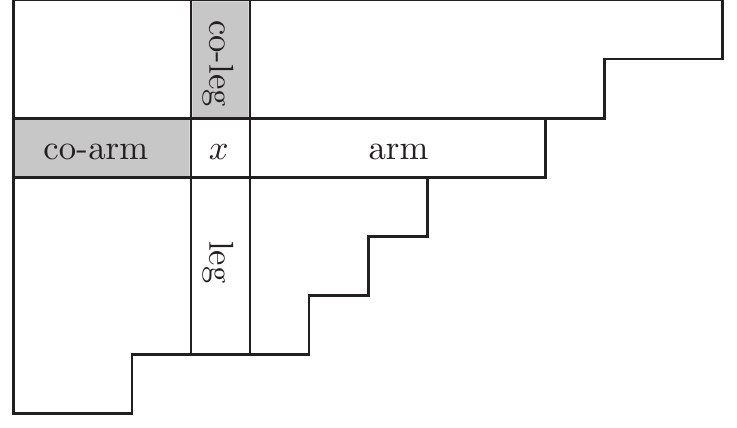}}
    \caption{Hook (clear) and co-hook (shaded) of a square $x$ in a Young
      diagram.}
    \label{fig:arm_leg}
  \end{center}\end{figure}

The dimension $\dim(\lambda)$ of a Young diagram $\lambda$ is defined as the number of standard
Young tableaux of shape $\lambda$ (thus named because it is the dimension of the corresponding
irreducible representations of the symmetric group).

\begin{lemma}[Hook formula; \cite{FRT}]
  \label{lemma_hook_formula} The dimension $\dim(\lambda)$ satisfies
  \[
  \dim(\lambda) = \frac{|\lambda|!}{\prod\limits_{x\in\lambda} h(x)}.
  \]
\end{lemma}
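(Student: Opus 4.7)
The plan is to prove the hook length formula by induction on $|\lambda|$, reducing it to a combinatorial identity which I verify by means of the probabilistic hook walk of Greene, Nijenhuis, and Wilf. Write $F(\lambda) := |\lambda|!/\prod_{x\in\lambda} h(x)$ for the claimed value. Since the entry $|\lambda|$ of any standard Young tableau of shape $\lambda$ must occupy a corner, and deleting that entry yields a standard Young tableau of shape $\lambda\setminus\{c\}$, we have
$$\dim(\lambda) \;=\; \sum_{c \text{ a corner of } \lambda} \dim(\lambda\setminus\{c\}).$$
The case $|\lambda|=1$ is immediate, so by induction it suffices to show that $F$ obeys the same recursion.

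Removing a corner $c=(r,s)$ leaves every hook length unchanged except those of boxes in the co-hook $\mathcal{C}(c)$, each of which drops by exactly $1$; moreover $h_\lambda(c)=1$. A short cancellation then reduces the desired recursion $F(\lambda)=\sum_c F(\lambda\setminus\{c\})$ to the purely combinatorial identity
$$|\lambda| \;=\; \sum_{c \text{ a corner of } \lambda}\; \prod_{x\in\mathcal{C}(c)} \frac{h(x)}{h(x)-1}. \qquad (\ast)$$

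To prove $(\ast)$ I use the \emph{hook walk}. Pick a uniformly random starting box $x_0\in\lambda$; at each step, from the current box $x_t$, jump to a uniformly random element of the hook of $x_t$ other than $x_t$ itself (equivalently, to a uniformly random box in the arm or leg of $x_t$). Since only corners have empty arm and leg, the walk almost surely terminates at some corner. The central claim is that it terminates at any prescribed corner $c=(r,s)$ with probability exactly
$$\frac{1}{|\lambda|}\prod_{x\in\mathcal{C}(c)} \frac{h(x)}{h(x)-1},$$
and because these probabilities must sum to $1$ over all corners, $(\ast)$ follows.

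The main obstacle is this hit-probability computation. Despite the two-dimensional nature of the walk, one shows that the distribution of the set of rows $I=\{i_1<\dots<i_k=r\}$ and columns $J=\{j_1<\dots<j_m=s\}$ that the walk visits decouples conditionally on the terminal corner; conditioning on $I$ and $J$, the transition probabilities factor into separate row- and column-contributions of the form $1/(h(i,s)-1)$ for $i\in I$ and $1/(h(r,j)-1)$ for $j\in J$. Summation over admissible pairs $(I,J)$ then telescopes into the advertised product over the full co-hook $\mathcal{C}(c)$, establishing the claim and hence the formula.
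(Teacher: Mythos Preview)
The paper does not actually prove this lemma; it simply records the statement and refers the reader to \cite{FRT,M,Sagan}. Your proposal supplies the Greene--Nijenhuis--Wilf probabilistic argument, which is one of the standard proofs found in those references (it appears, for instance, in Sagan's book). The reduction of the inductive step to the identity $(\ast)$ is carried out correctly.

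Your final paragraph is a sketch rather than a full argument, and the phrasing ``conditioning on $I$ and $J$'' is slightly misleading. The precise statement underlying it is: for a fixed corner $c=(r,s)$ and fixed subsets $I\ni r$ of rows and $J\ni s$ of columns, the total probability of all hook-walk paths that start at $(\min I,\min J)$, end at $c$, and visit exactly the rows $I$ and columns $J$, equals $\prod_{i\in I\setminus\{r\}}(h(i,s)-1)^{-1}\prod_{j\in J\setminus\{s\}}(h(r,j)-1)^{-1}$; this is proved by a short induction on $|I|+|J|$. Summing over all admissible $(I,J)$ then factors as $\prod_{i<r}\bigl(1+\tfrac{1}{h(i,s)-1}\bigr)\prod_{j<s}\bigl(1+\tfrac{1}{h(r,j)-1}\bigr)=\prod_{x\in\mathcal C(c)}\tfrac{h(x)}{h(x)-1}$, which is exactly the claimed hit probability times $|\lambda|$. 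With that clarification the argument is complete and correct.
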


See e.g.\ \cite{FRT,M,Sagan} for proofs.


\begin{corollary}
  \label{corollary_distribution_max_entry}
  Let $T$ be a uniformly random standard Young tableau of shape $\lambda$,
  and let $x$ be a corner of $\lambda$. The location $T^{-1}(|\lambda|)$ of
  the largest entry is distributed as follows.
  \[
  \P\big(T^{-1}(|\lambda|) = x \big)
  = \frac{\dim(\lambda\setminus\{x\})}{\dim(\lambda)}
  = \frac{1}{|\lambda|} \prod_{z\in \mathcal C(x)}
  \frac{h(z)}{h(z)-1} .
  \]
(Note that $h(z)>1$ for any box in the co-hook $\mathcal C(x)$, so the right side is finite.)
\end{corollary}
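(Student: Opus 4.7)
The plan is to establish the two equalities separately. For the first, I would argue combinatorially: the largest entry of a standard Young tableau of shape $\lambda$ must lie at a corner (since entries increase along rows and columns). Conditional on $T^{-1}(|\lambda|) = x$, the restriction of $T$ to $\lambda \setminus \{x\}$ is a standard Young tableau of that shape, and conversely any standard tableau of shape $\lambda \setminus \{x\}$ extends uniquely by placing $|\lambda|$ at $x$. Hence the number of standard Young tableaux of shape $\lambda$ with maximum entry at $x$ equals $\dim(\lambda \setminus \{x\})$, and dividing by $\dim(\lambda)$ gives the first equality.

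For the second equality, I would apply the hook formula (Lemma~\ref{lemma_hook_formula}) to both numerator and denominator. Writing $h(y)$ for the hook length in $\lambda$ and $h'(y)$ for the hook length in $\lambda \setminus \{x\}$, the ratio becomes
\[
\frac{\dim(\lambda\setminus\{x\})}{\dim(\lambda)} = \frac{(|\lambda|-1)!}{|\lambda|!} \cdot \frac{\prod_{y\in\lambda} h(y)}{\prod_{y\in\lambda\setminus\{x\}} h'(y)} = \frac{1}{|\lambda|} \cdot h(x) \cdot \prod_{y \in \lambda \setminus \{x\}} \frac{h(y)}{h'(y)}.
\]
Since $x$ is a corner, $h(x) = 1$, and it remains to identify the nontrivial factors in the product.

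The key observation is that removing the corner $x$ from $\lambda$ affects the hook of $y$ if and only if $x$ lies in the hook of $y$, that is, if and only if $y$ is in the same row strictly to the left of $x$, or the same column strictly above $x$. This is precisely the set $\mathcal{C}(x)$. For such $y$ one box (namely $x$) is removed from its arm or leg, so $h'(y) = h(y) - 1$; for all other $y \in \lambda\setminus\{x\}$ we have $h'(y) = h(y)$. Substituting, the product collapses to $\prod_{z \in \mathcal{C}(x)} h(z)/(h(z)-1)$, yielding the desired formula. The parenthetical remark that $h(z) > 1$ for $z \in \mathcal{C}(x)$ follows because such a $z$ has $x$ strictly in its arm or leg, so its hook contains at least two boxes.

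There is no real obstacle here: the argument is a direct application of the hook formula, with the only subtlety being the careful bookkeeping of which hook lengths change when the corner $x$ is removed.
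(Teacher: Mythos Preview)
Your proof is correct and follows exactly the approach the paper intends: the paper's own proof simply states that the corollary is immediate from the hook formula, and you have spelled out precisely those details (the bijection giving the first equality, and the bookkeeping of which hook lengths change under removal of a corner giving the second).
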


\begin{proof}
This is immediate from Lemma~\ref{lemma_hook_formula}.
\end{proof}

\begin{lemma}\label{lemma_box_likely}
  Fix $\ell>0$. Let a Young diagram $\lambda$ be a subset of the staircase
  Young diagram of size $n$, and let $x=(i,j)$ be a corner of $\lambda$ with
  $i,j\ge n/4$ and $n-i-j\le \ell$. Let $T$ be a uniformly random standard
  Young tableau of shape $\lambda$. We have
  \[
  \P\bigl( T(x) = |\lambda| \bigr) \geq \frac{c}{n},
  \]
  where $c$ is a constant depending only on $\ell$.
\end{lemma}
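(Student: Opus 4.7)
The plan is to apply Corollary~\ref{corollary_distribution_max_entry} to rewrite
\[
\P(T(x) = |\lambda|) \;=\; \frac{1}{|\lambda|} \prod_{z \in \mathcal{C}(x)} \frac{h(z)}{h(z)-1}.
\]
Since $\lambda$ is contained in the staircase of size $n$, we have $|\lambda| \le \binom{n}{2}$, so the first factor is at least $2/(n(n-1))$. The task then reduces to showing that the product is bounded below by a quantity of order $n$.

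For this I would use the monotonicity $h_\lambda(z) \le h_\mu(z)$ whenever $\lambda \subseteq \mu$ and $z \in \lambda$ (each hook in $\lambda$ is contained in the corresponding hook in $\mu$). Applied with $\mu$ equal to the staircase, for which $h_\mu(p,q) = 2(n-p-q)+1$, and writing $d := n-i-j \le \ell$, this gives $h_\lambda(i',j) \le 2(i-i'+d)+1$ on the co-leg $\{(i',j):i'<i\}$ and $h_\lambda(i,j') \le 2(j-j'+d)+1$ on the co-arm $\{(i,j'):j'<j\}$. Since $\mathcal{C}(x)$ is the disjoint union of co-leg and co-arm, I obtain
\[
\prod_{z \in \mathcal{C}(x)} \frac{h(z)}{h(z)-1} \;\ge\; \prod_{k=1}^{i-1}\frac{2(k+d)+1}{2(k+d)} \;\cdot\; \prod_{k=1}^{j-1}\frac{2(k+d)+1}{2(k+d)}.
\]

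Each of these one-dimensional products equals $\prod_{k=1}^{m+d}\tfrac{2k+1}{2k}$ divided by the constant $\prod_{k=1}^{d}\tfrac{2k+1}{2k}$. The numerator is a standard partial product whose value is of order $\sqrt{m+d}$ by Stirling, and the denominator depends only on $d \le \ell$. With $m = i-1$ and $m = j-1$ and the hypothesis $i, j \ge n/4$, each factor is therefore at least $c(\ell)\sqrt{n}$, so the full product is at least $c'(\ell)\, n$, yielding $\P(T(x) = |\lambda|) \ge c/n$. Finitely many small values of $n$ can be absorbed into the constant.

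The main (mild) obstacle is simply to set up the hook-length monotonicity cleanly and to verify that the shift by $d$ does not spoil the Stirling asymptotic; no finer information about the shape of $\lambda$ inside the staircase is needed beyond the containment.
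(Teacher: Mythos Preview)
Your proposal is correct and follows essentially the same route as the paper: apply Corollary~\ref{corollary_distribution_max_entry}, bound the co-hook hook lengths using containment in the staircase (the paper does this directly via $\lambda_{i-k}\le n-i+k$ rather than phrasing it as $h_\lambda\le h_\mu$, getting the slightly sharper $h\le \ell+2k+1$ instead of your $2\ell+2k+1$, but this only affects the constant), and then show the resulting product is of order $n$. The one cosmetic difference is in that last step: you invoke the Wallis/Stirling asymptotic $\prod_{k\le N}\tfrac{2k+1}{2k}\asymp\sqrt{N}$, while the paper avoids this by the telescoping trick
\[
\Bigl(\prod_{k<n/4}\tfrac{\ell+2k+1}{\ell+2k}\Bigr)^2 \ge \prod_{k<n/4}\tfrac{\ell+2k+1}{\ell+2k}\cdot\tfrac{\ell+2k+2}{\ell+2k+1}=\tfrac{\ell+2\lfloor n/4\rfloor+2}{\ell+2},
\]
which gives the linear lower bound without any asymptotics. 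Both are fine; the paper's version is a bit more self-contained.
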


There is nothing special about the bound $\frac{n}{4}$ on $i,j$ -- the lemma and proof hold as
long as $i,j\geq \eps n$, though the constant in the resulting bound tends to $0$ as $\eps \to 0$.

\begin{proof}[Proof of Lemma~\ref{lemma_box_likely}]
  The box $(i-k,j)$ of the co-hook has hook length $\lambda_{i-k}-j+k+1 \le
  n-i-j+2k+1 \le \ell+2k+1$. Similarly the box $(i,j-k)$ has hook length at
  most $\ell+2k+1$. It follows that
  \begin{align*}
    \P(T^{-1}(|\lambda|) = x)
     &= \frac{1}{|\lambda|} \prod_{k<i} \frac{h(k,j)}{h(k,j)-1}
    \prod_{k<j} \frac{h(i,k)}{h(i,k)-1} \\
     &\ge \frac{1}{n^2} \biggl( \prod_{k<n/4} \frac{\ell+2k+1}{\ell+2k}
     \biggr)^2.
  \end{align*}
  (Here we used that the factors are all decreasing in $h$, greater than
  $1$, and that $i,j\ge n/3$.) It is now easy to estimate
  \begin{align*}
  \biggl( \prod_{k<n/4} \frac{\ell+2k+1}{\ell+2k} \biggr)^2
  &\ge
  \biggl( \prod_{k<n/4} \frac{\ell+2k+1}{\ell+2k} \biggr)
  \biggl( \prod_{k<n/4} \frac{\ell+2k+2}{\ell+2k+1} \biggr) \\
  &= \frac{\ell + 2\lfloor n/4\rfloor + 2}{\ell+2}  > c n
  \end{align*}
  for some $c=c(\ell)$.
\end{proof}

\begin{lemma}
  \label{lemma_cond_prob}
  Let $T$ be a uniformly random standard Young tableau of shape $\lambda$,
  let $x$ and $y$ be two corners of $\lambda$ and $\ell = \|x-y\|_\infty$.
  Then
  \[
  \frac{\P(T^{-1}(|\lambda|) = x) }{ \P(T^{-1}(|\lambda|) = y) }
  \leq (\ell+1)(2\ell+1).
  \]
\end{lemma}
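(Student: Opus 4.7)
The plan is to apply Corollary~\ref{corollary_distribution_max_entry} to write
\[
R:=\frac{\P(T^{-1}(|\lambda|) = x)}{\P(T^{-1}(|\lambda|) = y)} = \prod_{z \in \mathcal{C}(x)\setminus\mathcal{C}(y)} \frac{h(z)}{h(z)-1} \cdot \prod_{z \in \mathcal{C}(y)\setminus\mathcal{C}(x)} \frac{h(z)-1}{h(z)},
\]
set $x=(i,j)$, $y=(i',j')$ with (WLOG) $i<i'$ and $j>j'$ (forced by the incomparability of two distinct corners of the same Young diagram), and then bound $R$ by telescoping. A direct enumeration shows $\mathcal{C}(x)\cap\mathcal{C}(y)=\{(i,j')\}$, that $\mathcal{C}(x)\setminus\mathcal{C}(y)$ is the column $j$ above $x$ together with the row $i$ left of $x$ minus $(i,j')$, and symmetrically for $\mathcal{C}(y)\setminus\mathcal{C}(x)$.

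I would then pair $(k,j)\in\mathcal{C}(x)$ with $(k,j')\in\mathcal{C}(y)$ for $k<i$, and $(i,k)\in\mathcal{C}(x)$ with $(i',k)\in\mathcal{C}(y)$ for $k<j'$. This leaves two short unpaired strips: $(i,k)$ for $j'<k<j$ and $(k,j')$ for $i<k<i'$, each of length at most $\ell$. The key identity driving the telescoping is that along matched columns or rows the hook lengths differ by the \emph{same} constant $\Delta:=(j-j')+(i'-i)\le 2\ell$, namely $h(k,j')=h(k,j)+\Delta$ and $h(i,k)=h(i',k)+\Delta$ (immediate from the corner identities $\lambda'_j=i$, $\lambda_i=j$).

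Writing $h(k,j)=m+1+e$ with $m=i-k\ge 1$ and $e\ge 0$, each column-paired factor becomes
\[
\frac{(m+1+e)(m+e+\Delta)}{(m+e)(m+1+e+\Delta)},
\]
which is decreasing in $e$, hence bounded by its value at $e=0$; the resulting product over $m$ telescopes to $i(1+\Delta)/(i+\Delta)\le 1+\Delta$. The row-paired factors are each $\le 1$ because the hook-length inequality reverses ($h(i,k)>h(i',k)$), as is the unpaired column product; the unpaired row product telescopes using $h(i,k)\ge j-k+1$ to at most $j-j'$. Multiplying the four contributions gives
\[
R \le (1+\Delta)(j-j') \le (2\ell+1)\,\ell \le (\ell+1)(2\ell+1),
\]
as desired. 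The main obstacle is setting up the pairing carefully, handling the shared box $(i,j')$, and recognizing the hook-length difference identity that makes all four products telescope cleanly; once that structure is in place, the remaining arithmetic is routine.
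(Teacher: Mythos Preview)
Your proof is correct and follows essentially the same route as the paper. Both arguments use Corollary~\ref{corollary_distribution_max_entry}, set up the identical matching (co-legs paired by row, co-arms paired by column), observe that matched hook lengths differ by the same constant $\Delta$ (the paper's $s$), and then bound the four resulting pieces: matched co-arm pairs contribute $\le 1$, unmatched $\mathcal{C}(y)$ boxes contribute $\le 1$, the unmatched co-arm of $x$ telescopes to at most $\ell+1$ (you get the slightly sharper $j-j'$), and the matched co-leg product is at most $\Delta+1\le 2\ell+1$. Your treatment is a bit more explicit about the single shared box $(i,j')$ and uses the concrete lower bound $h(k,j)\ge i-k+1$ to telescope the co-leg product, whereas the paper uses only that the hook lengths are distinct; these are cosmetic differences.
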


For our application all we need is a bound of the form $C(\ell)$ on this ratio, though we note
that the bound we get is close to optimal for a tableau of shape $(n+1,n,\dots,n)$ with $\ell+1$
rows, for large $n$.

\begin{proof}[Proof of Lemma~\ref{lemma_cond_prob}]
  To compare the expressions from
  Corollary~\ref{corollary_distribution_max_entry} for $x$ and $y$, let us
  introduce a partial matching between $\mathcal C(x)$ and $\mathcal C(y)$.
  We match boxes of the co-arm of $x$ and the co-arm of $y$ if they are in
  the same column. We match boxes of the co-leg of $x$ and the co-leg of
  $y$ if they are in the same row. All other boxes of $\mathcal C(x)$ and $\mathcal C(y)$ remain unmatched (see
  Figure~\ref{fig:matching}).

  \begin{figure}
\begin{center}
\scalebox{1.2}{\includegraphics{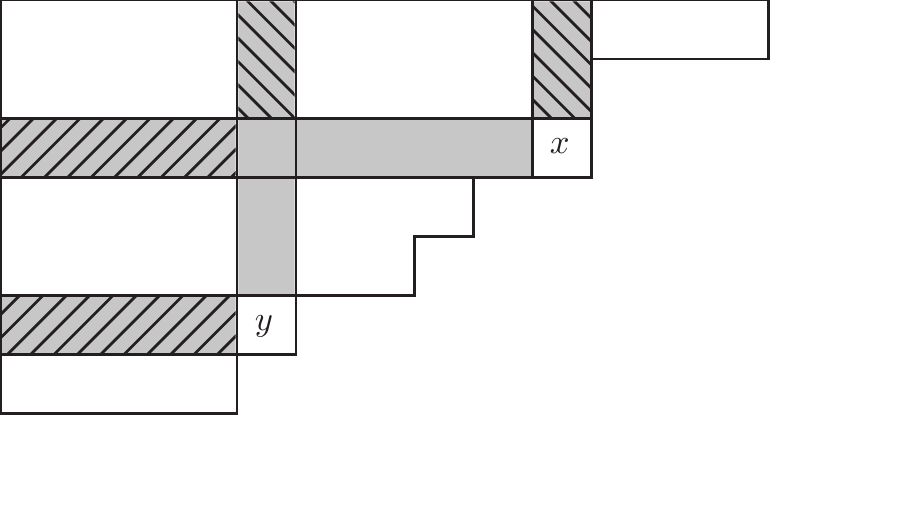}}
      \caption{The co-hooks $\mathcal C(x)$ and $\mathcal C(y)$ (shaded)
        and the matched parts of the co-arms and co-legs (hatched in
        matching directions).}
      \label{fig:matching}
    \end{center}\end{figure}

  Writing $x=(i_1,j_1)$ and $y=(i_2,j_2)$ without loss of generality assume
  that $i_1<i_2$ and $j_1>j_2$. Clearly, if $z\in\mathcal C(x)$ and
  $z'\in\mathcal C(y)$ are a matched pair, then $h(z')=h(z)\pm s$, where
  $s=i_2-i_1+j_1-j_2$ and the sign is plus if the box $z$ belongs to the
  co-leg of $x$ and minus otherwise. Let $M(x)$, $U(x)$ be the matched and
  unmatched parts of the co-hook $\mathcal C(x)$ and similarly for $y$. We have
  \begin{equation} \label{prod_formula}
    \frac{\P(T^{-1}(|\lambda|)=x)}{\P(T^{-1}(|\lambda|)=y)} =
    \dfrac{\prod\limits_{z\in U(x)} \dfrac{h(z)}{h(z)-1}}
    {\prod\limits_{z\in U(y)} \dfrac{h(z)}{h(z)-1}}
    \times \prod_{z\in M(x)} \dfrac{\left(\dfrac{h(z)}{h(z)-1}\right)}
    {\left(\dfrac{h(z)\pm s}{h(z)-1\pm s}\right)},
  \end{equation}
  where the choice of the sign $\pm$ depends on whether a box $z$ belongs to the co-arm
  or the co-leg of $x$.

  Let us bound the right side of \eqref{prod_formula}. First note that all
  the boxes in the co-leg of $x$ and all the boxes in the co-arm of $y$ are
  matched. The product over $z\in U(y)$ is at least $1$. Next, there are at
  most $\ell$ unmatched boxes of the co-arm of $x$ and their hook lengths
  are distinct. Consequently
  \[
  \prod_{z\in U(x)} \frac{h(z)}{h(z)-1} \le \prod_{m=2}^{\ell+1}\frac{m}{m-1}
  = \ell+1.
  \]

  Turning to the last product in \eqref{prod_formula}, a matched pair of
  boxes from the co-arms contributes to \eqref{prod_formula} the factor
  \[
  \dfrac{\left(\dfrac{h(z)}{h(z)-1}\right)}
  {\left(\dfrac{h(z)-s}{h(z)-1-s}\right)},
  \]
  which is easily seen to be less than $1$.

  Finally, every matched pair of boxes from the co-legs contributes to
  \eqref{prod_formula} the factor
  \[
  \dfrac{\left(\dfrac{h(z)}{h(z)-1}\right)}{\left(\dfrac{h(z)+s}{h(z)-1+s}\right)}
  = 1 + \frac{s}{(h(z)-1)(h(z)+s)},
  \]
  This is greater than $1$ for any $h(z)$. As $z$ varies over a co-leg of
  $x$, the values of $h(z)$ are distinct. Consequently, the contribution
  from the matched boxes from the co-legs is bounded from above by
  \[
  \prod_{m=2}^{\infty}
  \dfrac{\left(\dfrac{m}{m-1}\right)} {\left(\dfrac{m+s}{m-1+s}\right)}
  = \lim_{r\to\infty} \frac{r}{r+s}\;(s+1)
  = s+1 \le 2 \ell + 1.
  \]
  Multiplying all the aforementioned inequalities we get the required estimate.
\end{proof}

\section{Sequences of random variables}

Recall that a real-valued random variable $Y$ \df{stochastically dominates} another real-valued
random variable $Z$ if and only if there exist a probability space $\Omega$ and two random
variables $\widetilde Y, \widetilde Z$ defined on $\Omega$, such that $\widetilde Y \eqd Y$ and
$\widetilde Z \eqd Z$, and $\widetilde Y \ge \widetilde Z$ almost surely.

\begin{lemma}
  \label{lemma_n_tokens} Let $X_1,\dots,X_N$ be random variables taking values in
  $\{1,\dots,m,\infty\}$ such that a.s.\ each $a\in[1,m]$ appears exactly $r$ times.  Let $A_i$ be
  events, and define the filtration $\F_i = \sigma \left( X_1,\dots,X_i, A_1,\dots,A_{i-1}
  \right)$. Assume $\P(A_i\mid\F_i)\geq p$ a.s.\ for some $p>0$ and all $i$. Let $G_a$ be the
  event
  \[
  G_a=\bigcap_{i=1}^N \bigl( \{X_i\neq a\}\cup A_i\bigr) ,
  \]
  that is that $A_i$ occurs whenever $X_i=a$. Then $\sum_{a=1}^m 1_{G_a}$
  stochastically dominates the binomial random variable $\Bin(m,p^r)$.
\end{lemma}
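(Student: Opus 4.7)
The plan is to dominate each indicator $1_{G_a}$ from below by an indicator of a ``thinned'' event whose joint law we can compute exactly. The key move is to convert the lower bound $\P(A_i\mid\F_i)\ge p$ into an \emph{equality} $\P(B_i\mid\cdot)=p$ by introducing an extra source of independent randomness, so that the thinned variables become iid Bernoulli$(p)$ even after the adversarially chosen $X_i$'s are taken into account.

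Concretely, enlarge the probability space by iid variables $U_1,\dots,U_N$ uniform on $[0,1]$ and independent of $(X_i,A_i)_i$. Set $p_i:=\P(A_i\mid\F_i)\ge p$ and define $B_i:=A_i\cap\{U_i\le p/p_i\}$. Let $\hat\F_i:=\F_i\vee\sigma(U_1,\dots,U_{i-1})$. Independence of $U_i$ from $(X,A,U_{<i})$ gives $\P(A_i\mid\hat\F_i)=p_i$ and that $U_i$ remains uniform on $[0,1]$ independent of $A_i$ given $\hat\F_i$; hence $\P(B_i=1\mid\hat\F_i)=p_i\cdot p/p_i=p$. Since $B_j$ is $\hat\F_i$-measurable for $j<i$, iteration makes the $B_i$ iid Bernoulli$(p)$. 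Because $B_i\subseteq A_i$, setting
\[
H_a:=\bigcap_{i:\,X_i=a}\{B_i=1\}
\]
gives $H_a\subseteq G_a$, so $\sum_a 1_{H_a}\le\sum_a 1_{G_a}$.

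To compute the joint law of the $1_{H_a}$, fix $S\subseteq[m]$ and observe that $\bigcap_{a\in S}H_a$ coincides with $\bigcap_{i:X_i\in S}\{B_i=1\}$. Writing $M_i^S:=1[X_i\in S]$, which is $\F_i$-measurable, and using $\P(B_i=1\mid\hat\F_i)=p$, an inductive application of the tower property starting from the outermost step yields
\[
\P\Bigl(\bigcap_{a\in S}H_a\Bigr)
=\E\Bigl[\prod_{i=1}^N p^{M_i^S}\Bigr]
=p^{r|S|},
\]
where the last equality uses the deterministic count $\sum_i M_i^S=r|S|$. Substituting into the generating function identity
\[
\E\bigl[t^{\sum_a 1_{H_a}}\bigr]
=\sum_{S\subseteq[m]}(t-1)^{|S|}\,\P\Bigl(\bigcap_{a\in S}H_a\Bigr)
=\bigl(1+(t-1)p^r\bigr)^m
\]
reveals the probability generating function of $\Bin(m,p^r)$, so in fact $\sum_a 1_{H_a}\sim\Bin(m,p^r)$ exactly, and the stochastic domination follows.

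The main obstacle is the filtration bookkeeping. One must check that enlarging $\F_i$ by $\sigma(U_{<i})$ preserves $\P(A_i\mid\cdot)\ge p$ (which uses that $U$ is independent of $(X,A)$), and must exploit the fact that $X_i\in\F_i$ is resolved strictly \emph{before} $B_i$, which is what prevents the adversarially chosen assignments from reacting to the current coin flip. Once these points are settled, the remainder of the argument is the clean identity $\P(\bigcap_{a\in S}H_a)=p^{r|S|}$ and a generating-function unpacking.
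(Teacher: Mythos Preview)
Your approach is essentially the paper's: both enlarge the probability space so that thinned events $B_i\subseteq A_i$ (the paper's $A'_i$) satisfy $\P(B_i\mid\hat\F_i)=p$ exactly, then verify $\P\bigl(\bigcap_{a\in S}H_a\bigr)=p^{r|S|}$ for every $S$ and read off the $\Bin(m,p^r)$ law; your iid uniforms are tidier than the paper's atom-indexed Bernoullis, and the paper's stopping-time argument for that identity is the same computation as your tower property plus generating function.

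One small wrinkle: your backward iteration stalls after the first peel, because the factor $p^{M_N^S}$ you produce is not $\hat\F_{N-1}$-measurable, so you cannot simply condition on $\hat\F_{N-1}$ next. The fix is already in your hands: since $\prod_{i}p^{M_i^S}=p^{r|S|}$ is almost surely constant, divide through first and observe that $\prod_{i\le k}\bigl((M_i^S\,1_{B_i}+1-M_i^S)\,p^{-M_i^S}\bigr)$ is a $(\hat\F_{k+1})$-martingale with mean one, which makes the induction clean and yields $\E\bigl[\prod_i(M_i^S 1_{B_i}+1-M_i^S)\bigr]=p^{r|S|}$ directly.
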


To clarify the lemma, it helps to think of having $m$ counters initialized at 0. At each step
$i\le N$, a counter is selected dependent on $X_i$ (or no counter, signified by $X_i=\infty$), and
that counter is advanced (event $A_i$) with conditional probability at least $p$. The event $G_a$
is that the $a$th counter is advanced every time it is selected. Then after every counter has been
selected $r$ times, the number of counters with the highest possible value $r$ stochastically
dominates a $\Bin(m,p^r)$ random variable. Note that the order in which counters are selected may
depend arbitrarily on the past selections and advances. While this lemma seems intuitively clear
and perhaps even obvious, the precise assumptions on the dependencies among the events and
variables make the proof slightly delicate.

\begin{proof}[Proof of Lemma~\ref{lemma_n_tokens}]
  First, we want to extend the probability space, and define events $A'_i\subseteq A_i$ and a
  finer filtration $\F'_i$ in such a way that ${\P(A'_i\mid \F'_i) = p}$ for all $i$.

  Let $\Omega$ be our original probability space and let $\mu$ be our original probability
  measure. For $i=1,2,\dots ,N$ let $\mathcal E^i$ be the set of all elementary events in the
  finite $\sigma$--algebra $\F_i$ that have non-zero probabilities (with respect to $\mu$). The condition
  $\P(A_i\mid\mathcal F_i)\ge p$ means that $\P(A_i\mid E)\ge p$ for every $E\in\mathcal E^i$.
   For
  any $E\in\mathcal E^i$ let $\Omega^E_i$ denote the probability space $\{0,1\}$ with probability
  measure $\mu^E_i$ such that $\mu^E_i(1)={p}/{\P(A_i\mid E)}$. Our new probability space $\Omega'$ is
  the product of $\Omega$ and all $\Omega^E_i$:
  $$
   \Omega'=\Omega \times \prod_{i=1}^{N} \prod_{E\in\mathcal E^i} \Omega^E_i.
  $$
  In other words, an element of $\Omega'$ is a pair $(\omega,f)$, where $\omega\in\Omega$ and $f$
 is a function from $\bigsqcup_i \mathcal E^i$ to $\{0,1\}$ (here $\bigsqcup$
denotes set-theoretic disjoint union, so $\bigsqcup_i \mathcal E^i:=\cup_i\{(E,i):E\in
\mathcal{E}_i\}$). We equip $\Omega'$ with the probability
 measure $\mu'$ which is the direct product of $\mu$ and the measures $\mu^E_i$:
  $$
   \mu'=\mu \times \prod_{i=1}^{N} \prod_{E\in\mathcal E^i} \mu^E_i.
  $$
In what follows we do not distinguish between a random variable $X(\omega)$ defined on $\Omega$
and the random variable $X(\omega,f):=X(\omega)$ defined on $\Omega'$. In the same way we identify
any event $A$ of $\Omega$ with $\widetilde A:=\{(\omega,f)\in\Omega': \omega\in A\}\subseteq
\Omega'$. In what follows all the probabilities are understood with respect to $\mu'$.

For any $E\in\bigsqcup_i \mathcal E^i$ let $f^E$ denote the random variable on $\Omega'$ given by
$$
 f^E(\omega,f)=f(E).
$$

Now for any $E\in\mathcal E^i \subseteq \bigsqcup_j \mathcal E^j  $ set $$
B_i^E:=\{(\omega,f)\in\Omega'\mid \omega\in E,\, f(E)=1\}=E\cap\{f^E=1\}.
$$
Put it otherwise, $B_i^E$ is the event that both $E$ occurs and $f^E=1$. Denote
$$B_{(i)}=\bigcup_{E\in\mathcal E^i} B_i^E
$$ and let $A'_i=A_i\cap B_{(i)}$. Informally, to get $A'_i$ we cut $A_i$ into pieces $A_i\cap E$, replace
every such piece by $A_i\cap B_i^E$ and then glue pieces back together.

Let us introduce a filtration on $\Omega'$:
$$
 \F'_i=\sigma\Bigl(X_1,\dots,X_i,A_1,\dots,A_{i-1},\{f^E\}\Bigr),
$$
where $E$ runs over all elements of $\bigsqcup_{j=1}^{i-1} \mathcal E^j$.

Note that $A'_i\in\F'_{i+1}$. We claim that $\P(A'_i\mid \F'_i)=p$ for every $i$. Indeed, since
$A'_i$ is independent of all $f^E$ for $E\in\bigsqcup_{j=1}^{i-1} \mathcal E^i$, we have
$\P(A'_i\mid \F'_i)=\P(A'_i\mid \F_i)$. (Hear we mean that $\F_i$ is still  $\sigma \left(
X_1,\dots,X_i, A_1,\dots,A_{i-1}
  \right)$, although, now $\F_i$ lives in a different probability space.) But then, by the definition of $A'_i$,
  for every $E\in\F_i$ we have
  $$
   \P(A'_i\mid E)=\P(A_i\mid E) \frac{p}{\P(A_i\mid E)}=p.
  $$

  Moreover, consider any sequence of stopping times $1\le
  \tau_1<\dots<\tau_\ell\leq N$ (w.r.t.\ the filtration $\F'$). We claim
  that $\P\left(\bigcap_{i\leq \ell} A'_{\tau_i} \right) = p^\ell$.  The
  proof is a simple induction in $\ell$.  For $\ell=1$ we have
  \begin{align*}
    \P(A'_{\tau_1})&=\sum_{i=1}^N \P(A'_i\cap\{\tau_1=i\})\\
    &=\sum_{i=1}^N
    \P(\tau_1=i)\P(A'_i\mid \tau_1=i)=\sum_{i=1}^N
    \P(\tau_1=i)\cdot p = p,
  \end{align*}
  where in the last equality we used that $\P(A'_i\mid \F'_i)=p$ and $\{\tau_1=i\}\in\F'_i$. Now
assume that our statement is true for $\ell=h-1$. Then for $\ell=h$ we have
 \[
  \P\left(\bigcap_{i=1}^{h} A'_{\tau_i} \right)=\sum_{j=1}^N \P(\tau_1=j) \P(A'_j\mid \tau_1=j)
\P\left(\bigcap_{i=2}^h A'_{\tau_i}\mid A'_j\cap\{\tau_1=j\}\right).
 \]
 Note that for $i\ge 2$ the restriction of $\tau_i$ on the set $A'_j\cap\{\tau_1=j\}$ is again a
stopping time. Indeed, by the definition, $j< \tau_i\le N$ on $\{\tau_1=j\}$, and for $k>j$ we
have $\{\tau_i\le k\}\cap A'_j \cap \{\tau_1=j\}\in \F'_k$, since both $\{\tau_i\le k\}\in \F'_k$
and $A'_j \in \F'_k$ and $\{\tau_1=j\}\in \F'_k$. Therefore, using the induction assumption we
conclude that if $\P(A'_j\cap\{\tau_1=j\})>0$, then $\P(\bigcap_{i=2}^h A'_{\tau_i}\mid
A'_j\cap\{\tau_1=j\})=p^{h-1}$. Hence,
$$
  \P\left(\bigcap_{i=1}^{h} A'_{\tau_i} \right)=\sum_{j=1}^N \P(\tau_1=j) \P(A'_j\mid \tau_1=j)
p^{h-1}=\sum_{j=1}^N \P(\tau_1=j) p^h=p^h.
$$

  Now, let
  \[
  G'_a=\bigcap_{i=1}^N \bigl( \{X_i\neq a\}\cup A'_i\bigr)\subseteq G_a.
  \]
  Applying the above claim to the $r$ ordered stopping times $\tau_i$
  defined by
  \[
  \{\tau_1,\dots,\tau_r\} = \{k : X_k=a\}
  \]
  we find $\P(G'_a)=p^r$. Moreover, for any set $S\subseteq[1,m]$, by taking
  the $r|S|$ ordered stopping times $\tau^S_i$ defined by
  \[
  \{\tau_1,\dots,\tau_{r|S|}\} = \{k : X_k\in S\}
  \]
  we find
  \[
  \P\left(\bigcap_{a\in S} G'_a \right) =  p^{r|S|}.
  \]
  It follows that the events $G'_a$ are independent, and so
  \[
  \sum_{a=1}^m 1_{G_a} \geq \sum_{a=1}^m 1_{G'_a} \eqd \Bin(m,p^r). \qedhere
  \]
\end{proof}

\begin{lemma}\label{lemma_linear_time_estimate}
  Let $X_1,\dots,X_N$ be random variables taking values in $\{1,\dots,m,\infty\}$ such that a.s.\
  each $a\in[1,m]$ appears exactly $r$ times.  Denote $S_k(a):= \#\{i\leq k : X_i=a\}$, in particular $S_k(a)\le r$. Let
  $\widehat \F_k = \sigma(X_1,\dots,X_k)$, and suppose moreover that for some $c>0$ and all $a,k$,
  on the event $S_k(a)<r$ (which lies in $\widehat \F_k$), we have
  \[
  \P\bigl(X_{k+1}=a \mid \widehat \F_k\bigr) > \frac{c}{m}.
  \]
  Finally, let $D_k = \#\{a : S_k(a) = r\}$. Then for every $\eps>0$ there
  are constants $c_1,c_2$, depending on $c,r$ but not on $m$ or $N$, such that
  \[
  \P\big( D_{c_1 m} \le (1-\eps)m \big) < e^{-c_2 m}.
  \]
\end{lemma}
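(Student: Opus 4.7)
My plan is to reduce to an i.i.d.\ auxiliary process via a coupling, and then apply a bounded-differences concentration inequality to the resulting cleaner object. Without loss of generality assume $c<1$ (otherwise replace $c$ by $\min(c,1/2)$, which only weakens the hypothesis). Let $\nu$ be the law on $\{1,\dots,m,\infty\}$ with $\nu(a)=c/m$ for $a\in[1,m]$ and $\nu(\infty)=1-c$, and let $Z_1,Z_2,\dots$ be i.i.d.\ samples from $\nu$. Set $\widetilde{S}_k(a):=\#\{j\le k:Z_j=a\}$ and $\widetilde{D}_k:=\#\{a:\widetilde{S}_k(a)\ge r\}$. The central claim is that $(X_k)$ and $(Z_k)$ can be jointly constructed so that $D_{c_1m}\ge\widetilde{D}_{c_1m}$ almost surely.

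I would build the coupling step by step along the filtration $\widehat{\F}$. At each time $k$, having produced $X_1,\dots,X_{k-1}$ and $Z_1,\dots,Z_{k-1}$, sample $Z_k\sim\nu$ independently of $\widehat{\F}_{k-1}$ and then generate $X_k$ so that (a) its conditional marginal law given $\widehat{\F}_{k-1}$ matches the prescribed distribution $\mu_k(\cdot):=\P(X_k=\cdot\mid\widehat{\F}_{k-1})$, and (b) the forcing rule ``$Z_k=a$ and $S_{k-1}(a)<r$ imply $X_k=a$'' holds. Explicitly, if $Z_k=a$ with $a$ unfilled, set $X_k=a$; otherwise draw $X_k$ from the residual law proportional in $b$ to $\mu_k(b)-(c/m)\mathbf{1}\{b\text{ is unfilled at time }k-1\}$. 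The hypothesis $\mu_k(a)>c/m$ on every unfilled $a$ makes this residual non-negative, and a short bookkeeping check shows that its total mass matches the probability that $Z_k$ is not an unfilled counter, so the coupling is well-defined.

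With the coupling in hand, the domination $D_{c_1m}\ge\widetilde{D}_{c_1m}$ is immediate: if some $a$ satisfied $\widetilde{S}_{c_1m}(a)\ge r$ yet were not filled by time $c_1m$, then $S_{j-1}(a)<r$ for every $j\le c_1m$, so the forcing rule yields $X_j=a$ whenever $Z_j=a$, giving $S_{c_1m}(a)\ge\widetilde{S}_{c_1m}(a)\ge r$, a contradiction. It remains to bound $\P(\widetilde{D}_{c_1m}\le(1-\eps)m)$. Since $\widetilde{S}_{c_1m}(a)\sim\Bin(c_1m,c/m)$ has mean $cc_1$, a standard Chernoff bound gives $\P(\widetilde{S}_{c_1m}(a)<r)\le\delta(c_1)$ with $\delta(c_1)\to 0$ as $c_1\to\infty$; I would choose $c_1=c_1(c,r,\eps)$ so that $\delta(c_1)\le\eps/2$, whence $\E\widetilde{D}_{c_1m}\ge(1-\eps/2)m$. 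The quantity $\widetilde{D}_{c_1m}$ is a function of the i.i.d.\ vector $(Z_1,\dots,Z_{c_1m})$, and changing any single coordinate affects $\widetilde{D}_{c_1m}$ by at most $2$, so McDiarmid's bounded-differences inequality yields $\P(\widetilde{D}_{c_1m}\le(1-\eps)m)\le\exp(-c_2m)$ with $c_2=\eps^2/(8c_1)$, completing the argument.

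The delicate step I expect to be the main obstacle is the coupling construction in the second paragraph: one must produce a joint law that is simultaneously faithful to the original step-by-step conditional evolution of $(X_k)$, consistent with $(Z_k)$ being genuinely i.i.d.\ despite being interwoven with a complicated dependent process, and arranges the forcing identity $X_k=Z_k$ whenever $Z_k$ is an unfilled counter. Once this is in place, the combinatorial comparison with the i.i.d.\ process, the single-binomial Chernoff bound, and the McDiarmid concentration are all routine.
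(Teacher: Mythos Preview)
Your argument is correct and genuinely different from the paper's. The paper works directly with the original process: it sets $T_k=\sum_a S_k(a)$, observes that on the event $D_k\le(1-\eps)m$ at least $\eps m$ counters are unfilled so $\E(T_{k+1}-T_k\mid\widehat{\F}_k)\ge c\eps$, and hence $M_k:=c\eps k-T_k$, stopped when $D_k$ first exceeds $(1-\eps)m$, is a supermartingale with bounded increments. An application of Azuma--Hoeffding to this stopped supermartingale shows that the stopping time is at most $c_1m$ with probability $\ge 1-e^{-c_2m}$, for $c_1$ chosen so that $c\eps c_1>r$. Your route instead offloads all the randomness onto an i.i.d.\ auxiliary sequence $(Z_k)$ via the stepwise maximal coupling, and then applies a binomial tail bound plus McDiarmid to the clean object $\widetilde{D}_{c_1m}$.

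The paper's approach is shorter and avoids the coupling bookkeeping you flag as ``delicate''; your approach is more modular and yields explicit constants ($c_2=\eps^2/(8c_1)$). The coupling you describe is indeed well-defined: with $(Z_k,U_k)_{k\ge1}$ taken i.i.d.\ and independent of each other, $X_k$ is a deterministic function of $(X_1,\dots,X_{k-1},Z_k,U_k)$, so the $(Z_k)$ remain i.i.d.\ while the conditional law of $X_k$ given $\widehat{\F}_{k-1}$ is recovered exactly as you compute. One small housekeeping point: your comparison needs $c_1m\le N$, but this is harmless since $D_k=m$ for all $k\ge N$ (every $a$ has appeared $r$ times by then), making the conclusion trivial otherwise.
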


\begin{proof}
  Let $T_k = \sum_{a=1}^{m} S_k(a)$, clearly $0\le T_{k+1} -T_k\le 1$. Note that $T_k > m r - \eps m$
  implies ${D_k > (1-\eps)m}$. This is because $S_k(a)\le r$.

  On the event $D_k\leq (1-\eps)m$ there are at least $\varepsilon m$
  values $a$ for which $S_k(a)<r$, so by the condition of Lemma
  \ref{lemma_linear_time_estimate} we have $\E (T_{k+1}\mid \widehat
  \F_k)-T_k \geq c\eps$. Let $M_k$ be $c\eps k - T_k$, and let $M'_k$ be
  $M_k$ stopped when $D_k$ exceeds $(1-\eps)m$. More formally, the stopping
  time $K$ is the minimum number such that $D_K>(1-\eps)m$, and
  $M'_k = M_{k\wedge K}$.

  Observe that $M'_k$ is a supermartingale with bounded
  increments. Therefore, by the Azuma-Hoeffding inequality for
  supermartingales (which follows from the martingale version by Doob
  decomposition; see e.g.\ \cite{Az} or \cite[E14.2 and 12.11]{W}), for any
  $c_1>0$ there is a $c_2$ so that $\P(M_{c_1 m} > m) \leq e^{-c_2 m}$.

  If $M_{c_1 m} \leq m$ and $K>c_1 m$, then ${T_{c_1 m} \geq (c\eps c_1 m
    -1)m}$. If $c_1$ is such that $c\eps c_1 m - 1 > r$, this cannot hold,
  thus $M'$ is already stopped by time $c_1 m$ with probability at least
  $1-e^{-c_2m}$.
\end{proof}

\begin{corollary}\label{Corollary_n_tokens_with_estimate}
  Let $X_i$, $A_i$ for $i=1,\dots,N$ be two random sequences satisfying the
  assumptions of both Lemmas~\ref{lemma_n_tokens} and
  \ref{lemma_linear_time_estimate}. Let $\widehat G(a,i)$ be the
  intersection of the events $G_a$ and $\{S_i(a)=r\}$, i.e.\
  \[
  \widehat G(a,i) = \{S_i(a)=r\} \cap \bigcap_{j=1}^N \bigl( \{X_j\neq
  a\}\cup A_j\bigr).
  \]
  Set $\widehat Q(i)=\sum_a 1_{\widehat G(a,i)}$. There exist positive
  constants $c_1$, $c_2$, $c_3$ (which depend on $r$, $p$, $c$, but not on
  $m,N$) such that $\P\bigl(\widehat Q(c_1 m) > c_2 m\bigr) > 1-e^{-c_3 m}$.
\end{corollary}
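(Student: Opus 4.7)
My plan is to derive the corollary by combining the two previous lemmas via a union bound, exploiting the elementary inequality
\[
\widehat Q(i) = \sum_{a=1}^m 1_{\{S_i(a)=r\}}\,1_{G_a} \ge \sum_{a=1}^m 1_{G_a} - (m - D_i),
\]
which holds because every $a$ contributing to the gap $\sum_a 1_{G_a} - \widehat Q(i)$ must satisfy $S_i(a)<r$, and the number of such $a$ is precisely $m - D_i$. I also note that on the event $\{S_{c_1 m}(a)=r\}$ we have $X_j \ne a$ for every $j>c_1 m$, so $\widehat G(a, c_1 m)$ coincides with $\{S_{c_1 m}(a)=r\}\cap G_a$; this confirms that the full-time event $G_a$ of Lemma~\ref{lemma_n_tokens} is precisely the right object to compare with.

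First, I would apply Lemma~\ref{lemma_n_tokens} to conclude that $\sum_{a=1}^m 1_{G_a}$ stochastically dominates $\Bin(m,p^r)$. A standard Chernoff estimate then produces a constant $\alpha=\alpha(p,r)>0$ with
\[
\P\Bigl(\sum_{a=1}^m 1_{G_a} \ge \tfrac12 m p^r\Bigr) \ge 1 - e^{-\alpha m}.
\]
Second, I would set $\eps:=p^r/4$ and apply Lemma~\ref{lemma_linear_time_estimate} with this value of $\eps$ to obtain constants $c_1,\beta>0$ (depending on $c,p,r$) such that $\P\bigl(D_{c_1 m}\ge(1-\eps)m\bigr)\ge 1-e^{-\beta m}$.

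Combining these two estimates via a union bound together with the displayed inequality yields, with probability at least $1-e^{-\alpha m}-e^{-\beta m}$,
\[
\widehat Q(c_1 m) \ge \tfrac12 m p^r - \eps m = \tfrac14 m p^r.
\]
Taking $c_2:=p^r/4$ and $c_3:=\tfrac12 \min(\alpha,\beta)$, and absorbing the finitely many small values of $m$ by a further shrinking of constants, gives the claim. There is no real obstacle beyond bookkeeping: the substantive probabilistic content has already been isolated in Lemmas~\ref{lemma_n_tokens} and~\ref{lemma_linear_time_estimate}, and the only subtlety is to check, as done in the first paragraph above, that the two conclusions refer to compatible events so that they may be intersected on a single high-probability event.
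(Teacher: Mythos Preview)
Your proposal is correct and follows essentially the same approach as the paper: both apply Lemma~\ref{lemma_n_tokens} together with a binomial tail bound to get $\sum_a 1_{G_a}\ge c_4 m$ with high probability, then apply Lemma~\ref{lemma_linear_time_estimate} with $\eps$ chosen as half of that constant so that $m-D_{c_1 m}\le \eps m$ with high probability, and combine via the inequality $\widehat Q(i)\ge \sum_a 1_{G_a}-(m-D_i)$. Your write-up is in fact slightly more explicit than the paper's in spelling out this last inequality and the choice of constants.
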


If we again think about $m$ counters, then the corollary means simply that after time $c_1m$, with
probability at least $1-e^{-c_3m}$, at least $c_2m$ counters will have advanced $r$ times.

\begin{proof}[Proof of Corollary~\ref{Corollary_n_tokens_with_estimate}]
  Denote $ Q=\sum_a 1_{G_a}$. Lemma~\ref{lemma_n_tokens} implies that $Q$
  stochastically dominates a binomial random variable. Thus, by a standard
  large deviation estimate (see e.g.\ \cite[Chapter 27]{K}), for some
  positive constants $c_4$, $c_5$ we have
  \[
  \P(Q>c_4m) > 1-e^{-c_5m}.
  \]
  Take $\eps = c_4/2$ in Lemma~\ref{lemma_linear_time_estimate}. It follows that for some $c_1$
  with probability at least $1-e^{-c_6m}$ random variable $\widehat Q(c_1m)$ differs from $Q$ by
  not more than $c_4 m/2$. Thus,
\[
    \P\Bigl( \widehat Q(c_1m) > c_4m/2\Bigr)
    > 1-e^{-c_3m}.  \qedhere
\]
\end{proof}

\section{Proofs of the main results}

We are now ready to prove Theorems~\ref{any_subnetwork} and \ref{th_any_corner}. We denote by $S$
a fixed standard staircase-shape Young tableau of size $k$ and by $T$ a uniformly random standard
staircase-shape Young tableau of size $n$. In what follows $k$ and $S$ are fixed (and will
correspond to the pattern we are looking for) while $n$ tends to infinity. Given $S$, the idea is
to consider $c n$ specific disjointly supported subtableaux of $T$ in columns $\lfloor n/4
\rfloor, \dots, \lfloor 3n/4 \rfloor$ and show that linearly many (in $n$) of them are identically
ordered with $S$. Now we proceed to the detailed proofs.

\begin{proof}[Proof of Theorem~\ref{th_any_corner}]
  Within the staircase Young diagram $\lambda$ of size $n$ we fix $m :=
  \lfloor \frac{n-1}{2k-2} \rfloor$ disjoint subdiagrams $K_1,\dots,K_m$ of $\lambda$, each a
  translation of the staircase Young diagram of size $k$, placed along the border diagonal of $\lambda$ with no gaps
in-between. The total number of columns involved is
$$
  M:=\left\lfloor \frac{n-1}{2k-2} \right\rfloor (k-1),
$$
and we choose the column set $\lfloor n/4 \rfloor+1, \dots, \lfloor n/4 \rfloor +M$. An example is
shown  in Figure~\ref{Fig:2disj}.

\begin{figure}
\begin{center}
\includegraphics[width=0.5\textwidth]{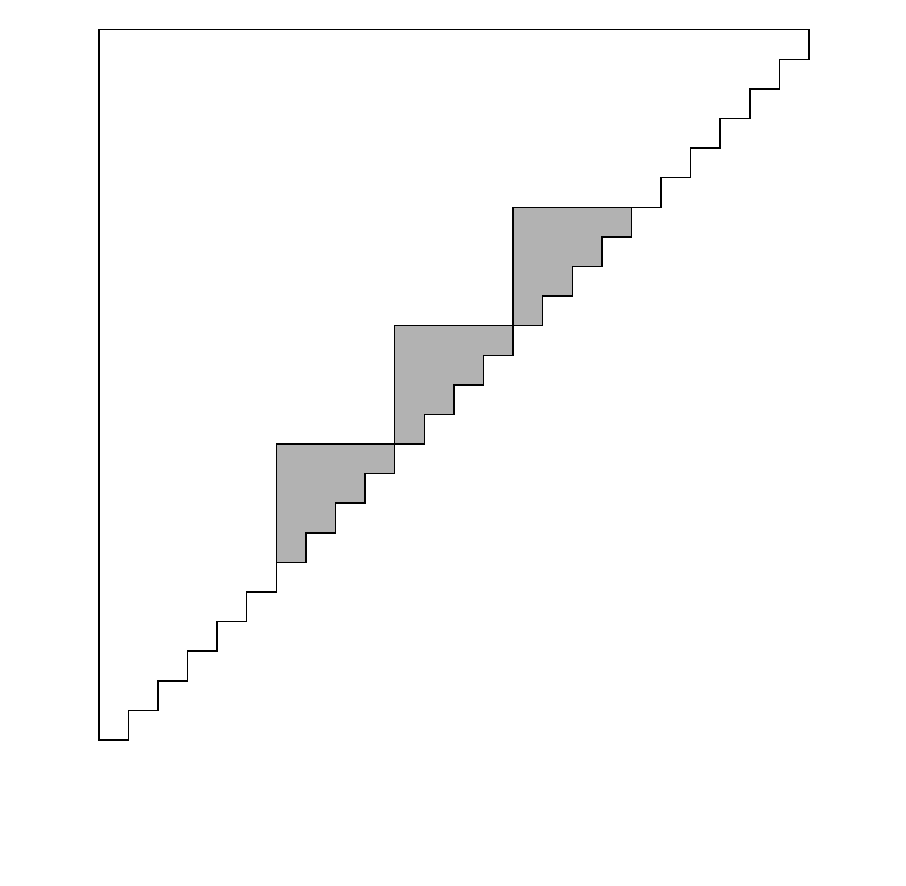}
\end{center}
  \caption{Three disjoint subdiagrams $K_1,K_2,K_3$ of a staircase Young
    diagram. Here $n=25$ and $k=5$.}
  \label{Fig:2disj}
\end{figure}

  Let $N := \binom{n}{2}$ and $r := \binom{k}{2}$. We now construct
  sequences $X_t$ and $A_t$ ($1\le t\le N$) to which we shall apply
  Lemmas~\ref{lemma_n_tokens} and \ref{lemma_linear_time_estimate}, as
  random variables on the probability space of standard staircase-shape
  Young tableaux $T$ of size $n$ with uniform measure. Set $X_t=a$ if
  $T^{-1}(N+1-t)$ belongs to $K_a$ and set $X_t=\infty$ if $T^{-1}(N+1-t)$
  does not belong to $\bigcup_a K_a$. Note that each $a\in\{1,\dots,m\}$
  appears exactly $r$ times among $X_1,\dots,X_N$.

  \newenvironment{mylist}{\begin{list}{(\roman{mycount})}
      {\usecounter{mycount}\itemsep 0pt}}{\end{list}}

  Next, we define the events $A_t$. Let $a=X_t$, and suppose $X_t$ is the $i$th
    occurrence of $a$ among $X_1,\dots,X_t$. The event $A_t$ occurs if and
    only if at least one of the following holds.
  \begin{enumerate}
  \item $a = \infty$.
  \item The box $T^{-1}(N-t+1)$ is in the same relative position within
      $K_a$ as $S^{-1}(r-i+1)$ is within a staircase Young diagram of
      size $k$.
  \item $A_s$ does not occur for some $s<t$ for which $X_s=a$.
  \end{enumerate}
  In other words, $A_t$ fails to occur precisely if for some number $a$
  the locations of entries $\{N-t+1,\dots,N\}$ imply that the subtableau
  supported by $K_a$ and $S$ are not identically ordered, and $A_s$ occurs for all $s<t$ (for that $a$).

  Let us also phrase this in terms of counters. Recall that a uniformly
  random standard staircase-shape Young tableau $T$ is associated with a
  Markov chain of decreasing Young diagrams $\{\lambda^t\}$. Each step of
  this Markov chain is a removal of a box from a Young diagram. If the box
  $x=\lambda^t \setminus \lambda^{t-1}$ removed at step $t$ belongs to
  $K_a$, then we choose the $a$th counter at this step. The counter
  advances if either the position of $x$ is the correct one for $K_a$ and
  $S$ to be identically ordered, or if the correct order of the entries of $T$
  inside $K_a$ was already broken at an earlier step. Clearly, if the $a$th
  counter advances $r$ times, then the subtableau of $T$ with support $K_a$
  is identically ordered with $S$.

  We shall see that the sequences $X_t$ and $A_t$, and the numbers $r$,
  $m$, $N$, satisfy the conditions of Lemma~\ref{lemma_n_tokens} and
  \ref{lemma_linear_time_estimate} with
$$p=\frac{1}{2k^3}.$$
  Theorem~\ref{th_any_corner} then follows immediately by applying
  Corollary~\ref{Corollary_n_tokens_with_estimate} for sequences $X_t$
  and $A_t$.

  As already noted, every $a \in \{1,\dots,m\}$ appears among
  $X_1,\dots,X_{N}$ exactly $r$ times.  Thus it remains to bound from below
  the conditional probabilities of $A_t$. Let $\mathcal F_t$ be as in
  Lemma~\ref{lemma_n_tokens}. We must prove that $\P(A_t\mid\mathcal F_t)\geq
  p$. Let $\mathcal G_t$ be the larger $\sigma$-algebra generated by
  $\lambda^0,\dots,\lambda^{t-1}$ together with $X_t$.

  If $X_t=\infty$ then $A_t$ occurs, and there is nothing to prove. So
  suppose $X_t=a\neq\infty$.  Now, on $X_t=a$, and given $\lambda^{t-1}=\mu$,
  there are at most $k-1$ corners of $\mu$ in $K_a$ (which
  correspond to possible positions of the box ${T^{-1}(N-t+1)}$).
  Lemma~\ref{lemma_cond_prob} implies that the probabilities of any two of
  these possibilities have a ratio of at most $2k^2$ (since the parameter
  $\ell$ in that lemma is at most $k-2$). Thus, for any Young diagram
  $\nu$ obtained from $\mu$ by removing a box inside $K_a$ we have
  \[
  \P(\lambda^t=\nu \mid \lambda^{t-1}=\mu,\, X_t=a) \geq p.
  \]
  Now, the Markov property of the sequence $\lambda^t$ imply that the same
  bound holds conditioned on all of $\lambda^0,\dots,\lambda^{t-1}$, i.e.\
  we have
   \[
  \P(\lambda^t=\nu \mid \mathcal G_t) \geq p
  \]
   on the event $\{X_t=a,\, \lambda^{t-1}=\mu\}$. Therefore, also
   \[
  \P(\lambda^t=\nu \mid \mathcal F_t) \geq p.
  \]

  Coming back to the bound on conditional probability of $A_t$, if some
  previous $A_s$ with $s<t$ and $X_s=X_t$ did not occur then $A_t$ occurs
  and $\P(A_t\mid \mathcal F_t)=1\ge p$. Otherwise, occurance of $A_t$
  depends on the position of the box ${T^{-1}(N-t+1)}$; specifically, $A_t$
  occurs if this box is the correct one according to $S$ of the possible
  boxes in the subdiagram $K_a$. We have shown above that each of the
  possible positions of this box has conditional probability at least
  $p$. Since exactly one of the positions corresponds to the
  event $A_t$, we conclude that
  \[
  \P(A_t\mid \mathcal F_t) \geq p.
  \]

  Finally, let us check that the sequence $X_t$ satisfies the conditions of
  Lemma~\ref{lemma_linear_time_estimate}.  Observe that the condition
  $S_t(a)<r$ means that the subdiagram $K_a$ is not completely filled with
  entries greater than $N-t$.  Thus, $S_t(a)<r$ if and only if
  $\lambda^{t}\cap K_a\neq\emptyset$, which is equivalent to $\lambda^t$
  having at least one corner in $K_a$.  Applying
  Lemma~\ref{lemma_box_likely} for $\lambda^t$ and this corner yields that
  for some positive constant $c$, on the event $S_t(a)<r$,
  \[
  \P(X_{t+1}=a \mid \lambda^{t}=\mu) > \frac{c}{m}
  \]
  Now, the Markov property of the sequence $\lambda^t$ imply that the same
  bound holds conditioned on all of $\lambda^0,\dots,\lambda^t$, and
  therefore also conditioned on the coarser $\sigma$-algebra $\widehat \F_t$.
\end{proof}

We now deduce Theorem~\ref{any_subnetwork} using the Edelman-Greene bijection.

\begin{proposition}
  \label{Proposition_any_subnetwork_simplified} Fix any pattern $\gamma$ of size $k$.
  There exist constants $c_3$, $c_4$ and $c_5$ (depending on $\gamma$) such
  that for every $n\ge k$, the pattern $\gamma$ occurs at least $c_3n$ times within the time interval $[1,c_4n]$ of a uniformly random sorting network of size $n$ with probability at least $1-e^{-c_5n}$.
\end{proposition}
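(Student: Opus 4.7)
The plan is to derive this proposition from Theorem~\ref{th_any_corner} via the Edelman--Greene bijection between standard staircase-shape Young tableaux and sorting networks. Since $\gamma$ is by definition the initial segment of some size-$k$ sorting network $\omega_\gamma$, let $S$ be the standard staircase-shape Young tableau of size $k$ corresponding to $\omega_\gamma$ under the inverse Edelman--Greene bijection. Let $T$ be the uniformly random staircase tableau of size $n$ whose Edelman--Greene image is the given uniformly random sorting network $\omega$. Apply Theorem~\ref{th_any_corner} with this $S$: with probability at least $1-e^{-c_3'n}$, the tableau $T$ contains at least $c_1'n$ pairwise-disjointly supported subtableaux $T|_{K_a}$, each identically ordered with $S$ and with all entries greater than $N-c_2'n$. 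Set $c_4 := c_2'$ and aim to produce $c_3 n$ pattern occurrences with $c_5 = c_3'$.

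The crux is to show that a linear fraction of these subtableaux $T|_{K_a}$ give rise to occurrences of $\gamma$ inside the time window $[1,c_4 n]$. Since all entries exceeding $N-c_4 n$ lie in $\bigcup_b K_b$, throughout the first $c_4 n$ steps of the Edelman--Greene algorithm the running maximum sits in some $K_b$, and the corresponding swap $s_t$ lies in $K_b$'s column range. For a fixed $K_a$, while the running maximum visits $K_a$, identical ordering with $S$ should force the $r=\binom{k}{2}$ successive swap positions within $K_a$'s column range to coincide with those produced by a standalone application of Edelman--Greene to $S$ (since the algorithm uses only the relative order of entries), and by the choice of $S$ these begin with $\gamma$. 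Thus each $K_a$ furnishes a candidate occurrence of $\gamma$ at a rectangle $[i_a,j_a]\times[a_a,b_a]\subseteq[1,c_4 n]\times[1,n-1]$.

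The main technical obstacle splits into two parts. First, the Edelman--Greene sliding paths initiated at corners of $K_a$ move toward $(1,1)$ and may exit $K_a$, importing outside entries into $K_a$ and so perturbing the internal structure of $T|_{K_a}$. I expect the resolution to be that every imported entry comes from outside $\bigcup_b K_b$, is therefore strictly smaller than every ``live'' entry of $T|_{K_a}$, and hence cannot alter either the order in which the remaining live entries become the running maximum or the larger-neighbor choices made by subsequent sliding paths inside $K_a$. Second, the definition of pattern occurrence forbids swaps at the columns immediately flanking $K_a$'s column range during the relevant time interval, whereas the $K_b$'s of Theorem~\ref{th_any_corner} are packed along the border diagonal with no gaps, so a neighboring $K_{a\pm1}$ could produce a forbidden swap. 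I would handle this by modifying the construction of the $K_b$'s in the proof of Theorem~\ref{th_any_corner} to interleave a buffer column (or two) between consecutive candidates, and then rerunning the counter argument of Corollary~\ref{Corollary_n_tokens_with_estimate}; this still yields $\Theta(n)$ successful $K_a$'s, now equipped with swap-free flanking columns during $[1,c_4 n]$, giving the desired $c_3 n$ pattern occurrences with the required exponential probability bound.
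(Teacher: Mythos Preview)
Your reduction via Edelman--Greene and your diagnosis of the two obstacles are both on target, but your resolution of the first obstacle contains a genuine gap. The premise ``all entries exceeding $N-c_4 n$ lie in $\bigcup_b K_b$'' is not delivered by Theorem~\ref{th_any_corner}: that theorem only guarantees that the entries \emph{inside the found subtableaux} exceed $N-c_2' n$, not that every large entry of $T$ sits in some $K_b$. In particular, the boxes immediately above or to the left of a good $K_a$ are indeed outside every $K_b$, but nothing forces their entries to be small. When a slide initiated at a corner of $K_a$ exits $K_a$, the imported entry may well exceed some of the remaining original entries of $K_a$; such an import can migrate to an anti-diagonal position in $K_a$'s column range and get removed there before the original $K_a$ entries are exhausted, producing a spurious swap and destroying the match with the standalone Edelman--Greene run on $S$. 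So ``identical ordering with $S$'' controls only the first slide; after that the internal order is contaminated and your claimed coincidence of swap positions does not follow.

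The paper resolves both obstacles simultaneously, without reopening Theorem~\ref{th_any_corner}, by a \emph{padding} trick. Instead of feeding the size-$k$ tableau $T_\gamma$ into Theorem~\ref{th_any_corner}, one feeds a size-$(k{+}2)$ tableau $S$ in which the first row and first column (the hook of box $(1,1)$) carry the $2k+1$ smallest entries, while the interior staircase of size $k$ is identically ordered with $T_\gamma$. In each found subtableau $K_\ell$ the interior $K'_\ell$ is thus encircled by a hook of entries smaller than everything in $K'_\ell$. Since entries only move down or right, every entry of $K'_\ell$ disappears at one of the interior $k-1$ anti-diagonal positions; since the $K'_\ell$ entries are the largest in $K_\ell$, they disappear before any hook entry does; and any outside entry that could disappear in $K_\ell$'s column range must first traverse the hook, hence cannot surface at the anti-diagonal before the hook entries do. The two extremal columns of $K_\ell$, which initially hold hook entries, automatically serve as the swap-free flanking columns, so no buffer modification to the earlier construction is needed.
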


Note that Proposition~ \ref{Proposition_any_subnetwork_simplified} differs from
Theorem~\ref{any_subnetwork} in that we consider only the beginning of the network and hence only
find a linear number of occurrences of $\gamma$.

\begin{proof}[Proof of Proposition~\ref{Proposition_any_subnetwork_simplified}]
  Clearly, it suffices to prove
  Proposition~\ref{Proposition_any_subnetwork_simplified} for patterns of
  length $k(k-1)/2$, or in other words a sorting network of size $k$.  Such a
  pattern $\gamma=(\gamma_1,\dots,\gamma_{k(k-1)/2})$ corresponds via the
  Edelman-Greene bijection to some standard staircase-shape Young tableau
  $T_\gamma$ of size $k$.  Consider a larger standard staircase-shape Young
  tableau $S$ of size $k+2$, which is a padded version of $T_\gamma$: entries of the hook of $(1,1)$
  are the numbers $1,\dots,2k+1$ (in an arbitrary admissible order) and the
  remaining staircase-shaped Young tableau of size $k-1$ contains
  $2k+2,\dots,(k+2)(k+1)/2$ and is identically ordered with $T_\gamma$.
  An example of this construction is shown in Figure~\ref{Figure_padding}.


  \begin{figure}
    \begin{center}
      \scalebox{0.8}{\includegraphics{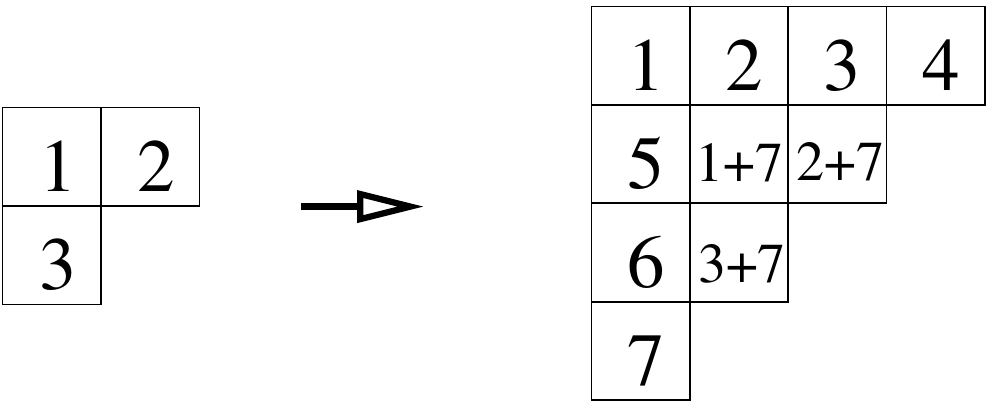}}
      \caption{``Padding'' a tableau $T_\gamma$ to get $S$. Here $k=3$.}
      \label{Figure_padding}
    \end{center}
  \end{figure}

  Let $c_3$, $c_4$ and $c_5$ be the constants $c_1'$, $c_2'$ and $c_3'$ of Theorem
  \ref{th_any_corner}, respectively. Let $T$ be a standard staircase-shape Young tableau of size $n$ having at
  least $c_3n$ disjointly supported subtableaux  identically ordered
  with $S$, furthermore, all the entries of these
  subtableaux are greater than $N-c_4n$.
  (Theorem~\ref{th_any_corner} implies that a uniformly random standard
  staircase-shape Young tableau of size $n\ge k$ is of this kind with
  probability at least $1-e^{-c_5n}$.) Suppose that the support of the $\ell$th such
  subtableau ($\ell=1,2,\dots,c_3n$) is a subdiagram $K_\ell$ with top-left corner
   $( n-j_{\ell}-k, j_{\ell})$. Let
  $K'_\ell$ denote the subdiagram with top-left corner $(n-j_{\ell}-k+1,j_{\ell}+1)$ and note that the subtableau
  with support $K'_\ell$ is identically ordered with $T_\gamma$.

  Let $\omega$ be the sorting network corresponding to $T$ via the
  Edelman-Greene bijection. Note that in the Edelman-Greene bijection,
  every tableau entry moves towards the boundary of the staircase Young
  diagram until it becomes the maximal entry in the tableau, and then it
  disappears and adds to the sorting network a swap in position $j$, where
  $j$ is the column of the entry just before it disappeared. It follows
  that all the entries starting in $K_\ell$ disappear in the columns
  $j_\ell,\dots,j_\ell+k$ and, thus, add to the sorting network swaps $s_i$
  satisfying $j_\ell\leq s_i \leq j_\ell+k$.
  Furthermore, observe that all the entries starting in $K'_\ell$ disappear (in
  columns $s_i$ satisfying $j_\ell < s_i < j_\ell+k$) before the entries in
  $K_\ell\setminus K'_\ell$. Finally, note that until all entries starting in $K'_\ell$ disappeared
  no other entry can disappear in columns $j_\ell,\dots,j_\ell+k$.

  We conclude that for every $\ell$, the pattern $\gamma$ occurs in $\omega$
at $[1,t_\ell]\times[j_\ell+1,j_\ell+k-1]$.  Thus, pattern $\gamma$ occurs in $\omega$ at least
$c_3n$ times within
  the time interval $[1,c_4n]$.
\end{proof}

\begin{proof} [Proof of Theorem~\ref{any_subnetwork}]
  Let $c_3$, $c_4$, $c_5$ be the constants from Proposition~\ref{Proposition_any_subnetwork_simplified}, and let $m:=\lceil c_4
  n\rceil$. For $t=1,\dots, \lfloor N/m \rfloor$ let $I_t$ be
  the set of all sorting networks $\omega$ of size $n$ such that $\gamma$
  occurs in $\omega$ at least $c_3n$ times within the time interval $[(t-1)m+1, t m]$.
  Proposition~\ref{Proposition_any_subnetwork_simplified} yields that $\P(I_1)\ge
  1-e^{-c_5n}$.

  A uniformly random sorting network $(s_1,s_2\dots,s_{N})$ is
  stationary in the sense that $(s_1,\dots,s_{N-1})$ and
  $(s_2,\dots,s_{N})$ have the same distributions (see \cite[Theorem~1]{AHRV}).
  Thus $\P(I_t)$ does not depend on $t$.

  There exist constants $c_6>0$ and $n_0$ such that if $n>n_0$, then $\left\lfloor N/m\right\rfloor e^{-c_5n} \le e^{-c_6n}$.
  Let $c_1=\min(c_3/(4c_4),c_3/n_0)$ and $c_2=\min(c_5,c_6)$.
  Let
  $I$ denote the set of all sorting networks $\omega$ of size $n$ such that
  $\gamma$ occurs $c_1 n^2$ times in $\omega$. If $n>n_0$ then we have
  \[
  \P(I) \ge \P\Bigl(\bigcap_{t} I_t\Bigr)
  \geq 1 - \sum_{t} \bigl(1-\P(I_t)\bigr)
  \geq 1 - \left\lfloor \frac{N}{m}\right\rfloor e^{-c_5n}\ge 1-e^{-c_2n}.
  \]
  And if $k\le n \le n_0$, then $I_1\subseteq I$ and
  \[
  \P(I)\ge \P(I_1) \ge 1-e^{-c_5n} \ge 1-e^{-c_2n}.
  \qedhere\]
\end{proof}

\section{Uniform sorting networks are not geometrically realizable}

\begin{proof}[Proof of Theorem~\ref{th_realizability}]
  Goodman and Pollack proved in the paper \cite{GP} that there exists a
  sorting network $\gamma$ of size $5$ that is not geometrically
  realizable. This sorting network is shown in\ Figure~\ref{fig:non_geom}.
  (This is the smallest possible size of such a network.)

  \begin{figure}[h]
\begin{center}
\scalebox{0.87}{\includegraphics{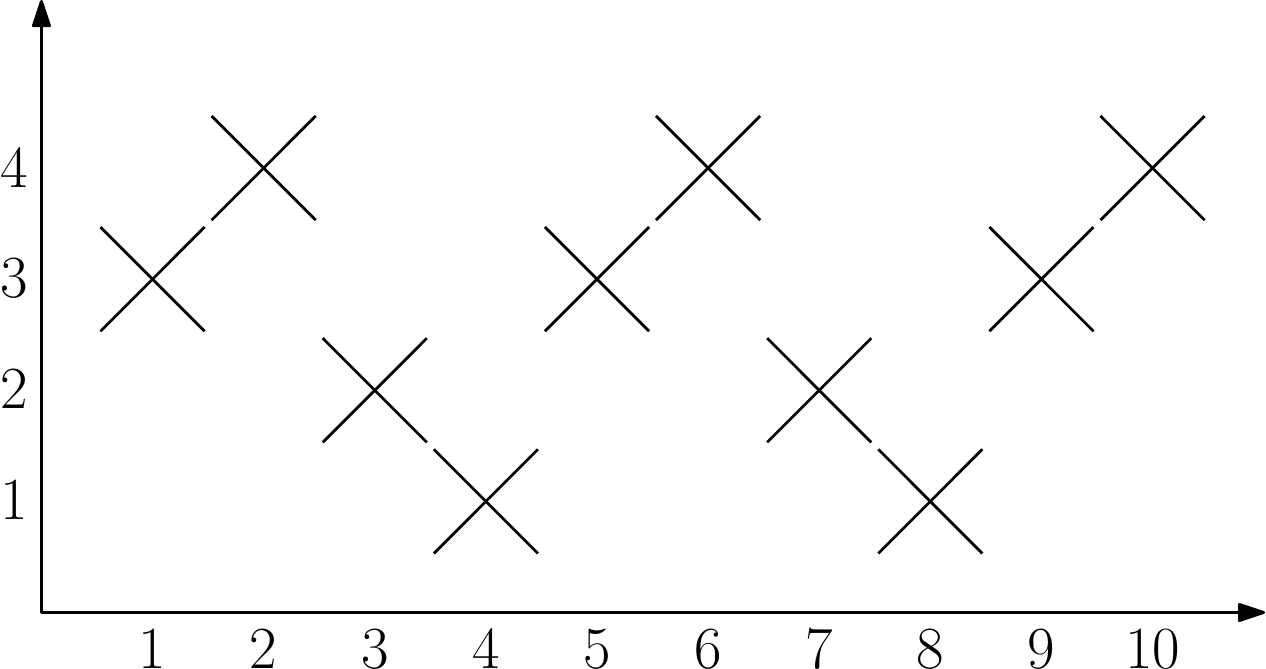}}
      \caption{A sorting network that is not geometrically realizable.}
      \label{fig:non_geom}
    \end{center}\end{figure}

  Let us view $\gamma$ as a pattern. Suppose that $\gamma$ occurs in a
  sorting network $\omega$ at time interval $[1,t]$ and position $[a,b]$.
  We claim that $w$ is not geometrically realizable. Indeed,
  if $\omega$ were a geometrically realizable sorting networks associated
  with points $x_1,\ldots ,x_n\in\R^2$ (labeled from left to right), then $\gamma$
  would be a geometrically realizable sorting network associated with the
  points $x_a,\dots,x_b$.

  Proposition~\ref{Proposition_any_subnetwork_simplified} yields that with
   tending to $1$ probability $\gamma$ occurs within the time interval $[1,c_4n]$
   of a uniformly random sorting network $\omega$ of size $n$ and
  thus $\omega$ is not geometrically realizable.
\end{proof}


\ACKNO{ We thank the anonymous referees for valuable comments. O.A. has been supported by the
University of Toronto, NSERC and the Sloan Foundation. V.G. has been supported by Microsoft
Research, Moebius Foundation for Young Scientists, ``Dynasty'' foundation, RFBR-CNRS grant
10-01-93114, the program ``Development of the scientific potential of the higher school'' and by
IUM-Simons foundation scholarship. }


\end{document}